\renewcommand{\tilde}{\widetilde}
\newcommand{\mf}{\mathfrak}
\newcommand\precdot{\mathrel{\ooalign{$\prec$\cr
  \hidewidth\raise0.000ex\hbox{$\cdot\mkern0.5mu$}\cr}}}
\newcommand{\Z}{\mathbb{Z}}
\newcommand{\C}{\mathbb{C}}
\newcommand{\ph}{\varphi}
\renewcommand{\S}{\mathfrak{S}}
\DeclareMathOperator{\inv}{Inv}
\DeclareMathOperator{\End}{End}
\DeclareMathOperator{\spn}{span}
\DeclareMathOperator{\wt}{wt}
\newtheorem{theorem}{Theorem}[section]
\newtheorem{prop}[theorem]{Proposition}
\newtheorem{lem}[theorem]{Lemma}
\newtheorem{cor}[theorem]{Corollary}
\theoremstyle{definition}
\theoremstyle{remark}
\newtheorem{remark}[theorem]{Remark}
\begin{document}

\title{The Sperner property for $132$-avoiding intervals in the weak order}
\author{Christian Gaetz}
\address{Department of Mathematics, Massachusetts Institute of Technology, Cambridge, MA.}
\email{\href{mailto:gaetz@mit.edu}{gaetz@mit.edu}} 

\author{Katherine Tung}
\address{Menlo School, Atherton, CA.}
\email{\href{mailto:KatherineATung@gmail.com}{KatherineATung@gmail.com}}

\thanks{C.G. was partially supported by an NSF Graduate Research Fellowship under grant No. 1122374}

\date{\today}

\begin{abstract}
A well-known result of Stanley from 1980 implies that the weak order on a maximal parabolic quotient of the symmetric group $S_n$ has the Sperner property; this same property was recently established for the weak order on all of $S_n$ by Gaetz and Gao, resolving a long-open problem.  In this paper we interpolate between these results by showing that the weak order on any parabolic quotient of $S_n$ (and more generally on any $132$-avoiding interval) has the Sperner property.

This result is proven by exhibiting an action of $\mf{sl}_2$ respecting the weak order on these intervals.  As a corollary we obtain a new formula for principal specializations of Schubert polynomials. Our formula can be seen as a strong Bruhat order analogue of Macdonald's reduced word formula. This proof technique and formula generalize work of Hamaker--Pechenik--Speyer--Weigandt and Gaetz--Gao.  
\end{abstract}


\maketitle

\section{Introduction} \label{sec:intro}

A ranked poset $P$ with rank decomposition $P=P_0 \sqcup \cdots \sqcup P_r$ is \emph{$k$-Sperner} if no union of $k$ antichains of $P$ is larger than the union of the $k$ largest $P_i$'s, and is \emph{strongly Sperner} if it is $k$-Sperner for $k=1,2,\ldots$.  The $1$-Sperner property is often simply called the \emph{Sperner property}.

The reader is referred to Section~\ref{sec:background} for background and definitions.  The following well-known result of Stanley \cite{Stanley-hard-lefschetz} was used to resolve an open problem of Erd\H{o}s:

\begin{theorem}[Stanley \cite{Stanley-hard-lefschetz}]
\label{thm:stanley-sperner}
The strong Bruhat order on any parabolic quotient $S_n^J$ of the symmetric group is strongly Sperner.
\end{theorem}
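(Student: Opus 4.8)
The plan is to realize the strong Bruhat order on $S_n^J$ as the poset of Schubert classes in the cohomology ring of a partial flag variety and to deduce the Sperner property from the hard Lefschetz theorem, following Stanley's original argument. Let $G=GL_n(\C)$, let $B$ be a Borel subgroup, and let $P_J\supseteq B$ be the standard parabolic subgroup corresponding to $J$. The partial flag variety $X=G/P_J$ is smooth and projective, and its cohomology has a Schubert basis $\{\sigma_w : w\in S_n^J\}$ with $\sigma_w\in H^{2\ell(w)}(X;\C)$. Thus I would set $V:=H^*(X;\C)=\bigoplus_{i=0}^r V_i$, where $V_i:=H^{2i}(X;\C)$ has basis indexed by the rank-$i$ elements of $S_n^J$ and $r=\dim_\C X$; the incidence of the basis along consecutive ranks is exactly the cover relation in the Bruhat order on $S_n^J$.

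Next I would produce an order-raising operator. Fix an ample class $\omega\in H^2(X;\C)$, concretely a strictly positive combination of the Schubert divisor classes, which span the ample cone since these classes generate $\mathrm{Pic}(X)$. Let $L\colon V\to V$ be multiplication by $\omega$. By Chevalley's formula, $\omega\cdot\sigma_w$ is a nonnegative combination of classes $\sigma_{w'}$ with $w'\gtrdot w$, and for $\omega$ in the interior of the ample cone the coefficient on each covering relation $w\lessdot w'$ is strictly positive. Hence $L$ raises rank by one and is order-raising: the matrix entry from $e_w$ to $e_{w'}$ is nonzero exactly along Hasse edges.

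I would then invoke the hard Lefschetz theorem: because $X$ is smooth projective and $\omega$ is a Kähler class, $L^{r-2i}\colon H^{2i}(X)\to H^{2(r-i)}(X)$ is an isomorphism for all $0\le i\le r/2$. Equivalently, $V$ carries an $\mf{sl}_2$-action with $L$ as raising operator, grading operator acting by $(2i-r)$ on $V_i$, and a dual lowering operator — precisely the structure this paper will build combinatorially. In particular every Lefschetz power $L^{r-2i}|_{V_i}$ is injective. Finally I would apply the standard principle of Sperner theory: if a finite ranked poset $P$ of rank $r$ admits an order-raising operator $L$ with $L^{r-2i}|_{V_i}$ injective for all $i\le r/2$, then $P$ is rank-symmetric, rank-unimodal, and strongly Sperner (a Peck poset). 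The mechanism is that injectivity of the single-step restrictions $L|_{V_i}$ for $i<r/2$ (and, by rank-symmetry, surjectivity for $i\ge r/2$) forces the $0/1$-supported Hasse incidence matrices to have rank-saturating matchings, and these matchings combine to yield the $k$-Sperner bounds for every $k$. Combining the four steps gives that $S_n^J$ is strongly Sperner.

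The main obstacle is the second step: one must verify that multiplication by a genuinely ample class is order-raising in the strict combinatorial sense, meaning that every cover relation $w\lessdot w'$ in $S_n^J$ receives a nonzero coefficient, so that the hard Lefschetz isomorphisms transfer into honest matching statements on the Hasse diagram. This positivity is exactly the content of Chevalley's formula together with the choice of $\omega$ in the interior of the ample cone; the hard Lefschetz theorem itself is then applied as a black box, and the passage from full-rank Lefschetz powers to the strong Sperner property is purely linear-algebraic.
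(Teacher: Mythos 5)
Your proposal is correct and is precisely Stanley's original argument from the cited source \cite{Stanley-hard-lefschetz}: identify $S_n^J$ with the Schubert basis of $H^*(G/P_J;\C)$, use Chevalley's formula to see that multiplication by a class in the interior of the ample cone is order-raising on Bruhat covers, invoke hard Lefschetz, and finish with the standard linear-algebra lemma that a full-rank order-raising operator makes a ranked poset Peck, hence strongly Sperner. The paper itself states this theorem purely as a citation and gives no proof of it, so your route coincides with the approach of the referenced work rather than with anything proved in the paper.
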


The weak (Bruhat) order is a natural coarsening of the strong Bruhat order which has the same rank decomposition.  Establishing the Sperner property for the weak order on a parabolic quotient would thus be a strengthening of Theorem~\ref{thm:stanley-sperner}.  Bj\"{o}rner \cite{Bjorner} asked whether the weak order on the whole symmetric group $S_n=S_n^{\emptyset}$ had the (strong) Sperner property; it was conjectured by Stanley \cite{Schubert-shenanigans} that it does, and this was proven in \cite{sperner-paper} by establishing a certain action of the Lie algebra $\mf{sl}_2$ which respects the weak order.  This technique was developed further in \cite{duality-paper, path-counting, Hamaker} in order to establish new formulas for principal evaluations of Schubert polynomials.  In this paper we generalize this $\mf{sl}_2$ action (Theorem~\ref{thm:sl2}) in order to establish the strong Sperner property (Corollary~\ref{cor:sperner}) for all $132$-avoiding intervals (i.e., intervals in the weak order below $132$-avoiding permutations), and in particular on all parabolic quotients $S_n^J$ of the symmetric group.  This approach yields new, more general, formulas for principal evaluations of Schubert polynomials (Theorem~\ref{thm:schubert}).

\subsection{A generalized $\mf{sl}_2$-action}
Given a set $X$ we let $\C X$ denote the vector space of formal linear combinations of elements of $X$. Let $\ell(\sigma)$ denote the Coxeter length of a permutation $\sigma$.  Let $\leq$ denote the right weak order on $S_n$ and $\preceq$ the strong Bruhat order.  For $\pi \in S_n$ a $132$-avoiding permutation, we define operators $E,F,H \in \End(\C [e,\pi]_R)$ by 
\begin{align*}
    E\sigma &= \sum_{\sigma \precdot \sigma t_{ij} \leq \pi} \wt^{\pi}\left(\sigma,\sigma t_{ij}\right) \sigma t_{ij}, \\
    F\sigma &= \sum_{\sigma s_i \lessdot \sigma \leq \pi} i \:  \sigma s_i, \\
    H\sigma &= \left(2 \ell(\sigma)-\ell(\pi) \right) \sigma,
\end{align*}
where 
\begin{align}
\label{eq:def-of-wt}
\wt^{\pi}(\sigma,\sigma t_{ij}) \coloneqq 1&+|\{k>j \: | \: \sigma_i < \sigma_k < \sigma_j \}| \\ \nonumber &+ |\{k>j \: | \: \pi^{-1}(\sigma_j) < \pi^{-1}(\sigma_k) < \pi^{-1}(\sigma_i)\}|, 
\end{align}
and where $[e,\pi]_R$ denotes the interval below $\pi$ in the right weak order.  The operator $F$ is the restriction of an operator suggested by Stanley \cite{Schubert-shenanigans} and $E$ is a significant generalization of the operator used in \cite{sperner-paper}.  Let $e,f,h$ denote the standard generators for the Lie algebra $\mf{sl}_2(\C)$ (see Section~\ref{sec:background}).

\begin{theorem}
\label{thm:sl2}
Let $\pi$ be a $132$-avoiding permutation, then the map sending $e \mapsto E, f \mapsto F, h \mapsto H$ defines a representation of $\mf{sl}_2(\C)$ on $\C [e,\pi]_R$.
\end{theorem}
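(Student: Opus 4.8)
The plan is to verify the defining relations of $\mathfrak{sl}_2(\C)$ for the operators $E, F, H$, namely
\begin{align*}
[H,E] &= 2E, & [H,F] &= -2F, & [E,F] &= H.
\end{align*}
The bracket relations $[H,E]=2E$ and $[H,F]=-2F$ should be routine: $H$ acts on $\sigma$ as the scalar $2\ell(\sigma)-\ell(\pi)$, while $E$ raises length by $1$ (it moves along covers $\sigma \precdot \sigma t_{ij}$ in the strong order) and $F$ lowers length by $1$ (it moves along weak-order covers $\sigma s_i \lessdot \sigma$), so these two relations follow immediately from comparing the scalar eigenvalues of $H$ before and after applying $E$ or $F$. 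The entire content of the theorem is therefore concentrated in the relation $[E,F]=H$.

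To attack $[E,F]=H$, I would fix a basis element $\sigma \in [e,\pi]_R$ and compute the coefficient of each basis element $\tau$ in $(EF-FE)\sigma$, showing it vanishes unless $\tau=\sigma$, in which case it equals $2\ell(\sigma)-\ell(\pi)$. Since $E$ changes length by $+1$ and $F$ by $-1$, both $EF\sigma$ and $FE\sigma$ are supported on permutations of the same length as $\sigma$, consistent with the diagonal form of $H$. For the \emph{off-diagonal} terms (the coefficient of some $\tau \neq \sigma$ with $\ell(\tau)=\ell(\sigma)$), I expect a cancellation argument: the two ways of reaching $\tau$ from $\sigma$—either go up via $E$ then down via $F$, or down via $F$ then up via $E$—pass through a common intermediate cover structure, and the carefully chosen weights $\wt^{\pi}$ and the multipliers $i$ on $F$ should be exactly calibrated so that the contributions cancel. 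For the \emph{diagonal} term, I would sum the weighted contributions over all strong-order upward covers $\sigma \precdot \sigma t_{ij} \leq \pi$ (via $EF$) minus those over all weak-order downward covers $\sigma s_i \lessdot \sigma$ (via $FE$), and show the telescoping difference collapses to $2\ell(\sigma)-\ell(\pi)$.

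\emph{The hard part} will be controlling the off-diagonal cancellation and the diagonal count simultaneously, because the two operators live in \emph{different} orders: $E$ traverses strong Bruhat covers $t_{ij}$ (transpositions of possibly non-adjacent values), while $F$ traverses weak covers $s_i$ (adjacent transpositions). The interaction between a strong cover and a weak cover is subtle, and the three-term definition of $\wt^{\pi}$ in \eqref{eq:def-of-wt}—with its two inversion-counting correction terms, one intrinsic to $\sigma$ and one twisted by $\pi^{-1}$—is presumably engineered precisely to make this interaction close up. I would therefore organize the computation by fixing the target $\tau$ and enumerating the possible ``commuting square'' diagrams: pairs $(t_{ij}, s_k)$ for which applying $E$ then $F$, and applying $F$ then $E$, both reach $\tau$. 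The crucial combinatorial identity will be a local balancing of $\wt^{\pi}$ against the linear multiplier $k$ in $F$ across each such square, and the condition that $\pi$ is $132$-avoiding should guarantee that the relevant covers stay inside $[e,\pi]_R$ and that the second correction term in $\wt^{\pi}$ behaves coherently. Establishing this local identity in full generality is where essentially all the difficulty lies; once it is in hand, summing over all diagrams yields both the vanishing of the off-diagonal coefficients and the exact value $2\ell(\sigma)-\ell(\pi)$ on the diagonal.
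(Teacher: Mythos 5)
Your high-level reduction coincides with the paper's: the brackets $[H,E]=2E$ and $[H,F]=-2F$ are indeed immediate because $E$ and $F$ raise and lower length by one, and the paper likewise splits $[E,F]=H$ into a diagonal claim, $(EF-FE)_{\sigma,\sigma}=2\ell(\sigma)-\ell(\pi)$ (Proposition~\ref{prop:diag}), and an off-diagonal claim, $(EF-FE)_{\sigma,\tau}=0$ for $\sigma\neq\tau$ (Proposition~\ref{prop:off-diag}). But your proposal stops exactly where the proof begins: both claims are left as expectations (the weights ``should be exactly calibrated,'' the diagonal ``telescoping difference collapses''), and neither is established. That is a genuine gap, since essentially all of the content of the theorem lives in those two computations, and neither of them is a routine verification.

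Concretely, for the off-diagonal claim the missing ingredient is a diamond lemma (Lemma~\ref{lem:existence-of-up-down}): for $\sigma\neq\tau$ there is an $\alpha\in[e,\pi]_R$ with $\tau\precdot\alpha\gtrdot\sigma$ if and only if there is a $\beta\in[e,\pi]_R$ with $\tau\gtrdot\beta\precdot\sigma$, both unique; the nontrivial point is that $\alpha=\sigma s_m$ actually lies in $[e,\pi]_R$, proved via the inversion-set containment $\inv(\alpha)\subseteq\inv(\pi)$, after which one must check $\wt^{\pi}(\tau,\alpha)=\wt^{\pi}(\beta,\sigma)$ by a case analysis on how $\{i,j\}$ meets $\{m,m+1\}$ (five cases, two of which need extra inversion arguments because the ranges of the summation index $k$ in the two weights differ). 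For the diagonal claim, the paper does not evaluate $(EF-FE)_{\sigma,\sigma}$ directly at all: it compares $E$ against the operator $E_{w_0}$ of Gaetz--Gao (the known case $\pi=w_0$) and shows the difference of the two commutator entries equals the constant $\binom{n}{2}-\ell(\pi)$ by a sign-grid/permutation-path argument. In that argument the role of $132$-avoidance is pinned down in Lemma~\ref{lem:forbidden-label}: if a swap $s_i$ is \emph{forbidden}, i.e.\ $\sigma s_i\notin[e,\pi]_R$, then both correction sets entering $\wt^{\pi}$ are empty, so omitting such covers from $E$ costs exactly $i$ per forbidden swap in the comparison. Note this is a different, and sharper, role than the one you assign to $132$-avoidance: it is not about ``keeping the relevant covers inside $[e,\pi]_R$'' (that holds by definition of the operators), but about controlling the would-be weights of the covers that \emph{leave} the interval, which is what makes the comparison with the $w_0$ case close up.
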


By observations of Stanley \cite{Stanley-hard-lefschetz} and Proctor \cite{Proctor2}, Theorem~\ref{thm:sl2} implies the following corollary:

\begin{cor}
\label{cor:sperner}
The weak order interval $[e,\pi]_R$ is strongly Sperner whenever $\pi$ avoids the pattern $132$. In particular, the weak order on any parabolic quotient $S_n^J$ is strongly Sperner.  
\end{cor}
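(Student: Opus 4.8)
The plan is to deduce the strong Sperner property for the ranked poset $([e,\pi]_R,\le)$ from the $\mf{sl}_2$-representation of Theorem~\ref{thm:sl2} by verifying the hypotheses of the order-raising/lowering criterion of Stanley \cite{Stanley-hard-lefschetz} and Proctor \cite{Proctor2}. First I would record that, since $H\sigma=(2\ell(\sigma)-\ell(\pi))\sigma$, the operator $H$ acts semisimply with the basis vector $\sigma$ lying in the eigenspace of eigenvalue $2\ell(\sigma)-\ell(\pi)$. Writing $r=\ell(\pi)$ and letting $P_i=\{\sigma\in[e,\pi]_R:\ell(\sigma)=i\}$ be the $i$-th rank of the interval, this identifies the weight-$(2i-r)$ space of the $\mf{sl}_2$-module $\C[e,\pi]_R$ with $\C P_i$: the weight-space decomposition coincides with the rank decomposition, and the weights occurring are exactly $-r,-r+2,\ldots,r$.

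Next I would extract the hard Lefschetz property from the representation theory of $\mf{sl}_2$. Decomposing $\C[e,\pi]_R$ into irreducible highest-weight modules and using that on each irreducible the operator $f^{k}$ restricts to an isomorphism from the weight-$k$ space onto the weight-$(-k)$ space for every $k\ge 0$, I would conclude that $F^{\,r-2i}\colon \C P_{r-i}\to \C P_i$ is injective (in fact bijective) for all $i\le r/2$. In particular the rank numbers satisfy $|P_i|=|P_{r-i}|$ and are unimodal, and $F$ restricts to an injection $\C P_{j}\hookrightarrow \C P_{j-1}$ for every $j>r/2$.

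The key point is then to observe that it is $F$, rather than $E$, that is compatible with the weak order. Indeed $F\sigma$ is a linear combination of the elements $\sigma s_i$ with $\sigma s_i\lessdot\sigma$, so its support consists of elements covered by $\sigma$ in the weak order; thus $F$ is an order-lowering operator for $([e,\pi]_R,\le)$. By contrast $E$ is assembled from arbitrary reflections $t_{ij}$ and is only order-raising for the strong Bruhat order $\preceq$: a term $\sigma t_{ij}$ with $\sigma\precdot\sigma t_{ij}$ need not satisfy $\sigma\le \sigma t_{ij}$, so $E$ is not order-raising for the weak order. This is exactly why the weak-order Sperner property is strictly stronger than Stanley's Theorem~\ref{thm:stanley-sperner}, since weak-order antichains can be strictly larger than strong-order antichains; it is resolved here because the $\mf{sl}_2$-relations transport the Lefschetz injectivity, naturally visible through the strong-order operator $E$, onto the weak-order-compatible operator $F$. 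With an order-lowering operator $F$ whose Lefschetz powers are injective now in hand, the criterion of Stanley and Proctor applies in its order-lowering form (equivalently, after passing to the dual poset) to give that $([e,\pi]_R,\le)$ is strongly Sperner.

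Finally, for the parabolic-quotient statement I would invoke the standard fact that each quotient $S_n^J$, equipped with the weak order, is isomorphic to a weak-order interval $[e,\pi]_R$ whose maximum element $\pi$ avoids the pattern $132$; the claim for $S_n^J$ is therefore the special case already established. I expect the main obstacle to be precisely the conceptual step in the third paragraph: recognizing that the strong-order operator $E$ cannot by itself control weak antichains, and that the correct input to the Sperner criterion is the lowering operator $F$, with Theorem~\ref{thm:sl2} supplying exactly the $\mf{sl}_2$-relations needed to guarantee injectivity of its Lefschetz powers.
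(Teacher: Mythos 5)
Your proposal is correct and follows essentially the same route as the paper: identify the rank decomposition of $[e,\pi]_R$ with the weight-space decomposition via $H$, use the standard fact that $f^k\colon V[k]\to V[-k]$ is an isomorphism for any finite-dimensional $\mf{sl}_2$-representation, and feed the order-lowering operator $F$ into Stanley's Lefschetz-type criterion for the strong Sperner property. The only cosmetic differences are that you spell out the irreducible decomposition and the reduction of $S_n^J$ to a $132$-avoiding interval (which the paper handles in its background section) explicitly, whereas the paper's proof cites these as known facts.
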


\begin{remark}
When $S_n^J$ is a \emph{maximal} parabolic quotient, the weak order and strong order agree, so that Theorem~\ref{thm:stanley-sperner} implies the weak order result in this case.  As \cite{sperner-paper} established the Sperner property for the \emph{minimal} parabolic quotient $S_n=S_n^{\emptyset}$, Corollary~\ref{cor:sperner} can also be seen as interpolating between these two results.
\end{remark}

\subsection{Principal specializations of Schubert polynomials}
See Section~\ref{sec:background-schubert} for the definition of the Schubert polynomials $\S_\sigma(x_1,\ldots,x_{n-1})$.  These polynomials are a central object of study in algebraic combinatorics and combinatorial algebraic geometry, and their \emph{principal specializations} $\S_\sigma(1,\ldots,1)$ have received considerable study \cite{Macdonald, Merzon-Smirnov, Morales-Pak-Panova}.  

Given a saturated chain $C: x_1 \lessdot \cdots \lessdot x_k$ in a poset $P$ and a weight function $\wt$ on the covering relations of $P$, we extend the weight function to $C$ multiplicatively:
\[
\wt(C)=\prod_{i=1}^{k-1} \wt(x_i \lessdot x_{i+1}).
\]

We establish the following formula for $\S_\sigma(1,\ldots,1)$, generalizing the main result of \cite{duality-paper}:

\begin{theorem}
\label{thm:schubert} 
Let $\sigma \in S_n$, and choose any $132$-avoiding permutation $\pi$ which is greater than $\sigma$ in the right weak order (such a $\pi$ always exists, since $w_0$ is $132$-avoiding).  Then
\[
\S_\sigma(1,\ldots,1)=\frac{1}{(\ell(\pi)-\ell(\sigma))!}\sum_{C:\sigma \to \pi} \wt^{\pi}(C),
\]
where the sum is over all saturated chains from $\sigma$ to $\pi$ in the strong Bruhat order on $[e,\pi]_R$, and where $\wt^{\pi}$ is defined by (\ref{eq:def-of-wt}).
\end{theorem}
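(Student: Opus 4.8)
The plan is to recast both sides of the claimed identity as matrix coefficients of the $\mf{sl}_2$-action from Theorem~\ref{thm:sl2} and then exploit highest-weight representation theory. Write $k=\ell(\pi)-\ell(\sigma)$, and for a permutation $\tau$ let $[\tau]v$ denote the coefficient of $\tau$ in $v\in\C[e,\pi]_R$. The definition of $E$ shows immediately that iterating $E$ records weighted saturated chains in the strong Bruhat order, so that $[\pi]\,E^k\sigma=\sum_{C:\sigma\to\pi}\wt^\pi(C)$; thus the theorem is equivalent to the operator identity $[\pi]\,E^k\sigma=k!\,\S_\sigma(1,\ldots,1)$. Dually, since the right weak order below $\pi$ is an order ideal, the restricted operator $F$ agrees with the global descent-lowering operator on the interval, whence $[e]\,F^{\ell(\sigma)}\sigma$ is exactly the sum over reduced words $(a_1,\ldots,a_{\ell(\sigma)})$ of $\sigma$ of the product $a_1\cdots a_{\ell(\sigma)}$. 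By Macdonald's reduced word formula this equals $\ell(\sigma)!\,\S_\sigma(1,\ldots,1)$, and likewise $[e]\,F^{\ell(\pi)}\pi=\ell(\pi)!\,\S_\pi(1,\ldots,1)$.

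To link the $E$-side and the $F$-side I would decompose $\C[e,\pi]_R$ into $\mf{sl}_2$-irreducibles. Because $\pi$ is the unique longest and $e$ the unique shortest element of the interval, $H$ acts on them by the extreme eigenvalues $+\ell(\pi)$ and $-\ell(\pi)$; hence $\pi$ is a highest-weight vector, $e$ a lowest-weight vector, and both lie in the unique summand $W$ of maximal highest weight $\ell(\pi)$. No other summand carries a weight vector of weight $\pm\ell(\pi)$, so replacing $\sigma$ by its projection onto $W$ changes neither $[\pi]\,E^k(\cdot)$ nor $[e]\,F^{\ell(\sigma)}(\cdot)$. Within $W$ the weight space at level $2\ell(\sigma)-\ell(\pi)$ is one-dimensional, spanned by $F^k\pi$, so this projection equals $c\,F^k\pi$ for a scalar $c$. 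The standard computation $E^kF^k\pi=k!\,\tfrac{\ell(\pi)!}{\ell(\sigma)!}\,\pi$ then gives $[\pi]\,E^k\sigma=c\,k!\,\tfrac{\ell(\pi)!}{\ell(\sigma)!}$, while applying $F^{\ell(\sigma)}$ and extracting the coefficient of $e$ gives $[e]\,F^{\ell(\sigma)}\sigma=c\,[e]\,F^{\ell(\pi)}\pi$.

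Combining these relations with the two instances of Macdonald's formula above yields
\[
[\pi]\,E^k\sigma=k!\,\frac{\S_\sigma(1,\ldots,1)}{\S_\pi(1,\ldots,1)}.
\]
The final and conceptually essential step is to note that a $132$-avoiding permutation is dominant, so that $\S_\pi$ is a single monomial and $\S_\pi(1,\ldots,1)=1$; this is precisely where the hypothesis on $\pi$ enters, and it collapses the display to the desired $[\pi]\,E^k\sigma=k!\,\S_\sigma(1,\ldots,1)$. I expect the main obstacle to be the representation-theoretic bookkeeping of the middle paragraph: verifying that the summands other than $W$ contribute nothing to the two relevant coefficients (which rests on the uniqueness of the top and bottom elements of the interval) and identifying the scalar $c$ cleanly from the $F$-computation. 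This argument runs parallel to the proof of the main result of \cite{duality-paper}, the new inputs being the generalized action of Theorem~\ref{thm:sl2} and the dominance of $\pi$.
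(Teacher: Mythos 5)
Your argument is correct, but it takes a genuinely different route from the paper's. Writing $k=\ell(\pi)-\ell(\sigma)$, you stay entirely inside $\C[e,\pi]_R$ and compute matrix coefficients of the action from Theorem~\ref{thm:sl2}: the chain sum is $[\pi]E^k\sigma$, the reduced-word sums are $[e]F^{\ell(\sigma)}\sigma$ and $[e]F^{\ell(\pi)}\pi$, and the multiplicity-one top irreducible summand $W$ (forced by $\dim V[\ell(\pi)]=\dim V[-\ell(\pi)]=1$, since $\pi$ and $e$ are the unique elements of extreme length) lets you relate all three through a single scalar $c$; Macdonald's formula (Theorem~\ref{thm:macdonald-identity}), applied to both $\sigma$ and $\pi$, together with $\S_\pi(1,\ldots,1)=1$ for $132$-avoiding $\pi$ (Proposition~\ref{prop:schubert-is-code}, plus the standard fact that the single monomial has coefficient $1$), then closes the loop. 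The steps you flagged as potential obstacles all check out: the complement of $W$ is $E$- and $F$-stable with zero weight spaces at $\pm\ell(\pi)$, so it contributes nothing to either coefficient; $W$ has one-dimensional weight spaces, giving $\sigma_W=cF^k\pi$; and $E^kF^k\pi=k!\,\frac{\ell(\pi)!}{\ell(\sigma)!}\,\pi$ is the standard highest-weight computation. The paper argues differently: it transfers the $\mf{sl}_2$-action to the space $U_n^{\pi}$ of $\pi$-padded Schubert polynomials, using Proposition~\ref{prop:applying-nabla} to identify $F$ with $\nabla=\sum_i y_i\partial/\partial x_i$, rank symmetry of $[e,\pi]_R$ and Proposition~\ref{prop:b-submodule} to see that $U_n^{\pi}$ is a subrepresentation, and the Jacobson--Morozov--Proctor uniqueness theorem (Theorem~\ref{thm:at-most-one-action}) to force $E$ to correspond to $\Delta=\sum_i x_i\partial/\partial y_i$ (Proposition~\ref{prop:applying-delta-to-padded}); Theorem~\ref{thm:schubert} then follows by evaluating $\Delta^{\ell(\pi)-\ell(\sigma)}\S_\sigma^{\pi}$ in two ways. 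The trade-off: your proof is shorter and avoids the padded-polynomial machinery, the uniqueness theorem, and the rank-symmetry input, but it consumes Macdonald's identity (twice) as a black box; the paper's route is logically independent of Macdonald's identity and yields Proposition~\ref{prop:applying-delta-to-padded}, a strong-order differential formula of independent interest, as a by-product.
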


The freedom to choose $\pi$ in Theorem~\ref{thm:schubert} is a unique feature of our result.  This is an advantage over previously known formulas for $\S_\sigma(1,\ldots,1)$ such as Macdonald's reduced word formula (Theorem~\ref{thm:macdonald-identity}) and its various strong order analogues (studied in \cite{duality-paper, path-counting}) in which one must always count weighted chains from $e$ to $\sigma$ or from $\sigma$ to $w_0$.  For example, Theorem~\ref{thm:schubert} makes it clear that $\S_\sigma(1,\ldots,1)=1$ when $\sigma$ is $132$-avoiding, as we may choose $\pi=\sigma$; this fact is not apparent from the other formulas.

\subsection{Outline}

The remainder of the paper is organized as follows: Section~\ref{sec:background} contains background, notation, and conventions for the weak and strong Bruhat orders, Schubert polynomials, and $\mf{sl}_2$-representations.  Section~\ref{sec:sl2} proves Theorem~\ref{thm:sl2} and Corollary~\ref{cor:sperner}.  Finally, Section~\ref{sec:schubert} derives Theorem~\ref{thm:schubert}.

\section{Background and conventions} \label{sec:background}

\subsection{The weak and strong Bruhat orders}
\label{sec:background-orders}
The material in this section may be found, for example, in \cite{bjorner-brenti}.

For $i=1,\ldots, n-1$ we write $s_i$ for the simple transposition $(i \text{ } i+1)$ in the symmetric group $S_n$.  Given any element $\sigma \in S_n$, the \emph{length} $\ell(\sigma)$ of $\sigma$ is the smallest number $\ell$ such that $\sigma=s_{i_1} \cdots s_{i_{\ell}}$ for some indices $i_j \in [n-1]\coloneqq \{1,2,\ldots,n-1\}$.  Such an expression of minimal length is called a \emph{reduced word} for $\sigma$.  The \emph{(right) weak (Bruhat) order} is the partial order $(S_n, \leq)$ having a cover relation $\sigma \lessdot \sigma s_i$ whenever $\ell(\sigma s_i)=\ell(\sigma )+1$.  We write $[x,y]_R$ for the interval $\{z \: | \: x \leq z \leq y\}$ in right weak order.

The \emph{inversion set} $\inv(\sigma)$ of a permutation $\sigma \in S_n$ is defined by:
\[
\inv(\sigma)=\{(\sigma_i, \sigma_j) \: | \: 1 \leq i < j \leq n, \: \sigma_i > \sigma_j \},
\]
where $\sigma_1 \ldots \sigma_n$ is the one-line notation for $\sigma$.  It is well-known that $\ell(\sigma)=|\inv(\sigma)|$ and that the weak order is characterized by containment of inversion sets:
\begin{prop}
\label{prop:weak-characterized-by-inversions}
For $u,v \in S_n$ we have $u \leq v$ if and only if $\inv(u) \subseteq \inv(v)$.
\end{prop}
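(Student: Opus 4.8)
The plan is to prove both implications by exploiting the fact that a single cover $\sigma \lessdot \sigma s_i$ in the right weak order alters the (value) inversion set in a completely controlled way. First I would record the basic observation that right multiplication by $s_i$ interchanges the values in the adjacent positions $i$ and $i+1$ while fixing all other positions. Since these positions are adjacent, the only pair of values whose relative order can change is $(\sigma_i,\sigma_{i+1})$; consequently, when $\sigma \lessdot \sigma s_i$ (so that $\sigma_i < \sigma_{i+1}$) we have $\inv(\sigma s_i)=\inv(\sigma)\sqcup\{(\sigma_{i+1},\sigma_i)\}$, a disjoint union adjoining exactly one new inversion.

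For the forward direction, given $u \leq v$ choose a saturated chain $u=z_0 \lessdot z_1 \lessdot \cdots \lessdot z_m=v$. By the observation each step enlarges the inversion set by one element, so $\inv(z_j)\subseteq\inv(z_{j+1})$ for all $j$, and composing gives $\inv(u)\subseteq\inv(v)$.

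The reverse direction is the substantive part. Since $|\inv(\sigma)|=\ell(\sigma)$, the hypothesis $\inv(u)\subseteq\inv(v)$ yields $|\inv(v)\setminus\inv(u)|=\ell(v)-\ell(u)$, and I would induct on this quantity. When it is $0$ we have $\inv(u)=\inv(v)$, and since a permutation is recovered from its value-inversion set (which records, for every pair of values, which appears first) we get $u=v$. For the inductive step it suffices to produce a simple transposition $s_i$ with $u \lessdot u s_i$ and $\inv(u s_i)\subseteq\inv(v)$: then $u s_i \leq v$ by induction, and combining with $u \leq u s_i$ gives $u \leq v$. By the opening observation, finding such an $s_i$ amounts to finding an ascent of $u$, i.e.\ a position $i$ with $u_i < u_{i+1}$, for which the newly created inversion $(u_{i+1},u_i)$ already lies in $\inv(v)$.

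The main obstacle is thus exhibiting this good ascent, which I would handle by an extremal argument. Since $\inv(u)\subsetneq\inv(v)$, the set $\inv(v)\setminus\inv(u)$ is nonempty; among its elements $(a,b)$ (so $a>b$, with $a$ preceding $b$ in $v$ but $b$ preceding $a$ in $u$) choose one minimizing the distance $q-p$, where $p=u^{-1}(b)<q=u^{-1}(a)$ are the positions of $b$ and $a$ in $u$. I claim this forces $q=p+1$, which immediately yields the desired ascent at $i=p$ with $(u_{i+1},u_i)=(a,b)\in\inv(v)$. Indeed, if $q\geq p+2$, let $c=u_{p+1}$ be the value immediately to the right of $b$ in $u$; comparing $c$ against $a$ and $b$ and using $\inv(u)\subseteq\inv(v)$ to transport known $u$-inversions into $v$, together with the total order on $\{a,b,c\}$ recorded in $v$, one checks in each case ($c>a$, or $b<c<a$, or $c<b$) that either $(a,c)$ or $(c,b)$ lies in $\inv(v)\setminus\inv(u)$ at strictly smaller distance in $u$, contradicting minimality. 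Hence the minimal pair is adjacent in $u$, completing the induction.
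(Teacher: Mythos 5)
Your proof is correct. Note, however, that the paper itself gives no proof of this proposition: it is stated as well known, with an implicit citation to the standard reference \cite{bjorner-brenti}, so there is no argument in the paper to compare against step by step. What you have reconstructed is essentially the classical textbook argument, and all the pieces check out: the observation that a cover $\sigma \lessdot \sigma s_i$ adjoins exactly the single value-inversion $(\sigma_{i+1},\sigma_i)$ (valid because the swapped positions are adjacent), the forward direction by walking a saturated chain, the base case using the fact that a permutation is determined by its value-inversion set, and the inductive step via the extremal choice of a pair $(a,b)\in\inv(v)\setminus\inv(u)$ whose positions $p<q$ in $u$ are at minimal distance. I verified your three-way case analysis on $c=u_{p+1}$: if $c>a$ then $(c,a)\in\inv(u)\subseteq\inv(v)$ forces $(c,b)\in\inv(v)\setminus\inv(u)$ at distance $1$; if $b<c<a$ then, according to whether $c$ precedes or follows $a$ in $v$, one of $(c,b)$ or $(a,c)$ lies in $\inv(v)\setminus\inv(u)$ at smaller distance; if $c<b$ then $(b,c)\in\inv(u)\subseteq\inv(v)$ forces $(a,c)\in\inv(v)\setminus\inv(u)$ at smaller distance. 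Each case contradicts minimality, so the minimal pair is adjacent and supplies the ascent needed to complete the induction. The one place where you should be explicit in a final write-up is that middle case, since which of the two pairs works depends on where $v$ places $c$; as sketched, the claim is stated correctly but the subcase split is left to the reader.
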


We sometimes write $t_{ij}$ for the transposition $(i \: j)$.  The \emph{strong (Bruhat) order} (often referred to in the literature just as the \emph{Bruhat order}) is the partial order $(S_n, \preceq)$ having a cover relation $\sigma \precdot \sigma t_{ij}$ whenever $\ell(\sigma t_{ij})=\ell(\sigma)+1$. Equivalently, $\sigma \precdot \sigma t_{ij}$ whenever $\sigma_i<\sigma_j$ and there is no index $k$ with $i<k<j$ such that $\sigma_i<\sigma_k<\sigma_j$.  Thus $\preceq$ is a refinement of $\leq$ and the two partial orders share the same rank structure, with rank function $\ell$.  Both the weak order and the strong order have unique minimal element $e=12\ldots n$, the identity permutation, and unique maximal element $w_0=n(n-1)\ldots 1$, called the \emph{longest element}, which has length ${n \choose 2}$. 

For $J \subseteq \{s_1,\ldots,s_{n-1}\}$, the corresponding \emph{parabolic quotient} is 
\[
S_n^J=\{\sigma \in S_n \: | \: s_i \in J \implies \sigma_i < \sigma_{i+1}\}. 
\]
Each parabolic quotient has a unique element $w_0^J$ of maximum length, and the set $S_n^J$ coincides with the interval $[e,w_0^J]_L$ in the \emph{left} weak order.  This interval is easily seen to be isomorphic as a poset to the interval $[e,w_0^J]_R$ in right weak order, so for the purposes of investigating the Sperner property it suffices to deal only with the right weak order throughout the paper.  The elements $w_0^J$ are all $132$-avoiding (see Section~\ref{sec:background-schubert} for the definition of pattern avoidance), so Corollary~\ref{cor:sperner} applies to the weak order on any parabolic quotient.  There are, however, many more $132$-avoiding permutations to which Theorem~\ref{cor:sperner} applies that are not of the form $w_0^J$.

\subsection{Schubert polynomials and pattern avoidance}
\label{sec:background-schubert}

See \cite{Macdonald} or \cite{Manivel} for background on Schubert polynomials.

The symmetric group $S_n$ acts on the polynomial algebra $\C[x_1,\ldots, x_n]$ by permutation of variables.  For each $i=1,\ldots,n-1$ the \emph{$i$-th Newton divided difference operator} $N_i$ acts on polynomials by
\[
N_i f = \frac{f- s_if}{x_i-x_{i+1}}.
\]
Given an $n$-tuple $\alpha$, we write $x^{\alpha}$ for the monomial $x_1^{\alpha_1}\cdots x_n^{\alpha_n}$; we write $\alpha \leq \beta$ if $\alpha_i \leq \beta_i$ for all $i$ and let $\rho_n$ denote the staircase $(n-1,n-2,\ldots,2,1,0)$.

The \emph{Schubert polynomials} $\S_\sigma$ are indexed by permutations $\sigma \in S_n$ and may be defined recursively as follows: 
\begin{itemize}
    \item $\S_{w_0}=x^{\rho_n}$, and 
    \item $\S_{s_i\sigma}=N_i \S_\sigma $ if $\ell(s_i\sigma)=\ell(\sigma)-1$.
\end{itemize}
The polynomial $\S_\sigma$ is homogeneous of degree $\ell(\sigma)$ and these polynomials form a basis for the space
\[
U_n = \spn_{\C}\{x^{\alpha} \: | \: \alpha \leq \rho_n\}
\]
as $\sigma$ ranges over the symmetric group $S_n$.  Schubert polynomials are of great interest because they represent cohomology classes of Schubert varieties in the cohomology $H^*(G/B)$ of the complete flag variety, but this perspective will not be important here.

\begin{remark}
Our convention for the indexing of Schubert polynomials differs from that often used in the literature by an inverse in the subscript. However, our convention agrees with that used in \cite{duality-paper, path-counting, Hamaker}.  
\end{remark}

We say a permutation $\sigma \in S_n$ \emph{contains the pattern} $\pi \in S_k$ if there are indices $1 \leq i_1 < \cdots < i_k \leq n$ such that $\sigma_{i_1}, \ldots, \sigma_{i_k}$ are in the same relative order as $\pi_1,\ldots, \pi_k$.  If $\sigma$ does not contain the pattern $\pi$, it is said to \emph{avoid} $\pi$.  The following proposition is well known:

\begin{prop}[See \cite{Macdonald}]
\label{prop:schubert-is-code}
The Schubert polynomial $\S_\sigma$ consists of a single monomial if and only if $\sigma$ avoids $132$.
\end{prop}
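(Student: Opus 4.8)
The plan is to reduce the statement to the classical theory of \emph{dominant} permutations. Recall that the \emph{Lehmer code} of $\sigma$ is the vector $c(\sigma)$ with $c(\sigma)_i=|\{j>i : \sigma_j<\sigma_i\}|$, so that $\ell(\sigma)=\sum_i c(\sigma)_i$. First I would record the standard code dictionary: $\sigma$ avoids $132$ if and only if $c(\sigma)$ is weakly decreasing, i.e.\ a partition contained in the staircase $\rho_n$ (such $\sigma$ being called \emph{dominant}); a descent $\sigma_i>\sigma_{i+1}$ forces $c(\sigma)_i>c(\sigma)_{i+1}$; and under an adjacent transposition at a descent the code transforms by $c(\tau s_i)_i=c(\tau)_{i+1}$, $c(\tau s_i)_{i+1}=c(\tau)_i-1$. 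Because $132$ is an involution, $132$-avoidance is invariant under $\sigma\mapsto\sigma^{-1}$; combined with the inverse discrepancy noted in the Remark above (so that $\S_\sigma=\S^{\mathrm{cl}}_{\sigma^{-1}}$ for the classical right-multiplication Schubert polynomials $\S^{\mathrm{cl}}_w$ satisfying $\S^{\mathrm{cl}}_{ws_i}=N_i\S^{\mathrm{cl}}_w$ at descents), it suffices to prove the classical statement: $\S^{\mathrm{cl}}_w$ is a monomial if and only if $w$ avoids $132$.

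The engine is the action of $N_i$ on a single monomial: a one-line computation shows $N_i x^{\alpha}=0$ when $\alpha_i=\alpha_{i+1}$, and otherwise $N_i x^{\alpha}$ is a signed sum of exactly $|\alpha_i-\alpha_{i+1}|$ distinct monomials with coefficients $\pm 1$; in particular it is again a single nonzero monomial precisely when $|\alpha_i-\alpha_{i+1}|=1$. For the direction \emph{$132$-avoiding $\Rightarrow$ monomial} I would prove the sharper claim $\S^{\mathrm{cl}}_w=x^{c(w)}$ for dominant $w$, by downward induction on $|\rho_n|-\ell(w)$ from the base case $\S^{\mathrm{cl}}_{w_0}=x^{\rho_n}$. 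The key is to stay inside the dominant locus: if $w\neq w_0$ is dominant then $\lambda=c(w)$ is a partition strictly inside $\rho_n$, hence not strictly decreasing, so it has a topmost pair of equal adjacent parts $\lambda_i=\lambda_{i+1}$. Adding one box in row $i$ yields the code of a dominant $\tau$ with $\ell(\tau)=\ell(w)+1$ and $w=\tau s_i$, and one checks that $c(\tau)_i-c(\tau)_{i+1}=1$, that $\tau$ is still a partition (by the topmost choice), and that the box stays inside $\rho_n$ automatically (since $\lambda_i=\lambda_{i+1}\le n-i-1$). By the monomial computation and induction, $\S^{\mathrm{cl}}_w=N_i x^{c(\tau)}=x^{c(w)}$, a single monomial.

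For the converse I would argue by contradiction using positivity, which sidesteps the code bookkeeping entirely. Suppose $\S^{\mathrm{cl}}_w=x^{\alpha}$ is a single monomial but $\alpha$ is not a partition, say $\alpha_i<\alpha_{i+1}$. Then $x^{\alpha}$ is not symmetric in $x_i,x_{i+1}$, so $N_i\S^{\mathrm{cl}}_w\neq 0$; using $N_i^2=0$ together with the recursive definition, a nonzero $N_i$-image must itself be a Schubert polynomial $\S^{\mathrm{cl}}_{ws_i}$. On the other hand, the explicit form of $N_i x^{\alpha}$ shows that when $\alpha_i<\alpha_{i+1}$ every coefficient of $N_i\S^{\mathrm{cl}}_w$ is negative, contradicting the (standard) nonnegativity of Schubert coefficients. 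Hence $\alpha$ is a partition, so $w$ is dominant, i.e.\ avoids $132$.

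The main obstacle is not any single hard step but the bookkeeping in the forward direction, together with one conceptual point: the number of monomials is \emph{not} monotone along the defining recursion (for instance $N_i$ collapses $x_i+x_{i+1}$ to the constant $1$), so a naive induction that mixes dominant and non-dominant permutations fails. This is exactly why the two directions must be handled asymmetrically---the forward implication by a chain deliberately chosen to remain inside the dominant locus (with each step having code-gap exactly $1$), and the converse by the sign/positivity argument, which avoids tracking chains altogether.
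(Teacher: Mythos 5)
The paper itself gives no argument for this proposition --- it is quoted from \cite{Macdonald} --- so your proof must be judged against the standard one, which it largely reconstructs. Your reduction to the classical convention (valid because $132^{-1}=132$, so pattern-avoidance is inverse-invariant), the explicit computation of $N_i$ on a monomial, and the forward direction are all correct: the downward induction from $w_0$, adding a box at the \emph{topmost} equal adjacent pair of the code so that the new code is still a partition inside $\rho_n$ and the code-gap at that position is exactly $1$, does keep the whole chain inside the dominant locus, and your observation that a naive induction fails because $N_i$ does not behave monotonically on the number of monomials is exactly the right thing to worry about.

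However, the converse has a genuine gap at its final step. What your positivity argument establishes is that if $\S^{\mathrm{cl}}_w=x^{\alpha}$ is a single monomial, then the \emph{exponent vector} $\alpha$ is weakly decreasing. But ``$w$ is dominant'' means, by your own dictionary, that the \emph{Lehmer code} $c(w)$ is weakly decreasing, and nothing in the argument identifies $\alpha$ with $c(w)$; that identification is precisely what your forward direction proves, and only for $w$ already known to be dominant. As written, ``hence $\alpha$ is a partition, so $w$ is dominant'' is a non sequitur. The gap is fillable with tools you already have: every monomial appearing in a Schubert polynomial indexed by $S_n$ has exponent componentwise at most $\rho_n$ (this is the statement, recorded in Section~\ref{sec:background-schubert}, that the Schubert polynomials lie in $U_n$), so $\alpha$ is a partition contained in the staircase and is therefore the code of a unique dominant $v\in S_n$; your forward direction then gives $\S^{\mathrm{cl}}_v=x^{\alpha}=\S^{\mathrm{cl}}_w$, and since distinct permutations have distinct Schubert polynomials (they form a basis of $U_n$), $w=v$ is dominant. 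You should also state explicitly that the monomial-nonnegativity of Schubert coefficients you invoke is a nontrivial imported theorem (e.g.\ via the Billey--Jockusch--Stanley formula); it is independent of this proposition, so there is no circularity, but it is doing the heavy lifting in your converse and deserves to be flagged as such.
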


\subsection{Facts about $\mf{sl}_2$-representations}
\label{sec:background-sl2}
See \cite{Kirillov} for definitions and basic facts regarding Lie algebras and their representations.

The Lie algebra $\mf{sl}_2=\mf{sl}_2(\C)$ has standard generators
\begin{align*}
    &e=\begin{pmatrix} 0 & 1 \\ 0 & 0 \end{pmatrix},& &f=\begin{pmatrix} 0 & 0 \\ 1 & 0 \end{pmatrix},&
    &h=\begin{pmatrix} 1 & 0 \\ 0 & -1 \end{pmatrix},&
\end{align*}
which satisfy the defining relations
\begin{align*}
    &[e,f]=h,& &[h,e]=2e,& &[h,f]=-2f.&
\end{align*}
We write $\mf{b}^-$ for the subalgebra of $\mf{sl}_2$ generated by $h$ and $f$.  If $V$ is an $\mf{sl}_2$-representation (assumed throughout to be complex and finite-dimensional), we let
\[
V[k]=\{v \in V \: | \: hv = kv \}
\]
denote the $k$-weight space.  It is well-known that $V[k] \neq 0$ only if $k \in \Z$ and that $eV[k] \subseteq V[k+2]$ and $fV[k] \subseteq V[k-2]$.  There is a unique irreducible $\mf{sl}_2$-representation $V_d$ of each dimension $d=1,2,\ldots$.  The representation $V_d$ has weights $d-1, d-3, \ldots, -(d-1)$, with each weight space 1-dimensional, a basis for $V_d$ being given by $v, fv, \ldots, f^{d-1}v$ where $v \in V_d[d-1]$ is any nonzero vector.

The following theorem follows from the Jacobson--Morozov Theorem \cite{Jacobson, Morozov} and was shown explicitly by Proctor \cite{Proctor2}: 

\begin{theorem}[Jacobson \cite{Jacobson}, Morozov \cite{Morozov}, Proctor \cite{Proctor2}]
\label{thm:at-most-one-action}
Let $V$ be a finite-dimensional complex vector space.  Given $E,H$ (resp. $F,H$) in $\End(V)$, there is at most one element $F$ (resp. $E$) in $\End(V)$ such that $(e,f,h)\mapsto (E,F,H)$ determines an $\mf{sl}_2$-representation.
\end{theorem}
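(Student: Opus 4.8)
The plan is to prove both statements by the same device: reduce the uniqueness of the completing operator to the vanishing of a single difference operator, and then deduce that vanishing from the non-negativity of highest weights in a finite-dimensional $\mf{sl}_2$-module. I would treat the case where $E$ and $H$ are given and one must show $F$ is unique; the other case is entirely symmetric. So suppose $(e,f,h)\mapsto(E,F,H)$ and $(e,f,h)\mapsto(E,F',H)$ both define representations of $\mf{sl}_2$ on $V$, and set $D=F-F'\in\End(V)$. Using the first triple, $V$ becomes a finite-dimensional $\mf{sl}_2$-module, which is all the structure the argument will use.

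Next I would record the two homogeneous relations satisfied by $D$. Subtracting the two instances of $[e,f]=h$ gives $EF-FE=H=EF'-F'E$, hence $[E,D]=0$; subtracting the two instances of $[h,f]=-2f$ gives $[H,D]=-2D$. The crucial reinterpretation is to view these as statements about the adjoint action of $\mf{sl}_2$ on the finite-dimensional vector space $\End(V)\cong V\otimes V^*$, under which $X\cdot\phi=X\phi-\phi X$. In this language $[H,D]=-2D$ says precisely that $D$ lies in the $(-2)$-weight space of $\End(V)$, while $[E,D]=0$ says that $D$ is annihilated by the raising operator; that is, $D$ is a highest-weight vector of the $\mf{sl}_2$-module $\End(V)$.

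Finally I would invoke the structure theory recalled in Section~\ref{sec:background-sl2}: in any finite-dimensional $\mf{sl}_2$-representation every highest-weight vector generates an irreducible summand $V_d$ and therefore has weight $d-1\ge 0$. Since $D$ is a highest-weight vector of weight $-2<0$, it must be zero, so $F=F'$. For the symmetric statement one sets $D=E-E'$, finds $[F,D]=0$ and $[H,D]=2D$, so that $D$ is now a lowest-weight vector of weight $+2$; as lowest weights are $\le 0$, this again forces $D=0$. I do not expect a serious obstacle: the only real content is the observation that the homogeneous relations satisfied by the difference operator are exactly the conditions defining a highest- (resp.\ lowest-) weight vector in $\End(V)$ of a weight whose sign is incompatible with finite-dimensionality. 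An alternative, more hands-on route would replace this step by a downward induction on $H$-weight, using the injectivity of $E$ on negative weight spaces together with $ED=DE$ to propagate $Dv=0$ from the top; but the adjoint-action argument is cleaner and avoids casework.
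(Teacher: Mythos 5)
Your proof is correct, but there is an important contextual point: the paper does not prove this statement at all. It is quoted as a known result, attributed to Jacobson \cite{Jacobson}, Morozov \cite{Morozov}, and Proctor \cite{Proctor2}, and used as a black box in the proof of Proposition~\ref{prop:applying-delta-to-padded}; so there is no internal proof to compare against, and your argument supplies a self-contained one. Your route is the standard short proof: view $\End(V)\cong V\otimes V^*$ as an $\mf{sl}_2$-module via $x\cdot\phi=\rho(x)\phi-\phi\rho(x)$ for the triple $(E,F,H)$ (the Jacobi identity makes this a representation), note that $D=F-F'$ satisfies $e\cdot D=[E,D]=0$ and $h\cdot D=[H,D]=-2D$ because these relations involve only the shared operators $E$ and $H$, and conclude $D=0$ since a nonzero vector annihilated by the raising operator in a finite-dimensional module must have nonnegative weight. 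That last fact is where all the content lives, and your justification of it is sound, though slightly heavier than necessary: invoking ``generates an irreducible summand $V_d$'' uses complete reducibility, whereas the elementary identity $e f^k D = k(\lambda-k+1)f^{k-1}D$ with $\lambda=-2$, combined with the fact that $f^N D=0$ for some minimal $N\ge 1$ (the vectors $f^kD$ have distinct weights $-2-2k$), yields $N(-N-1)f^{N-1}D=0$ and hence $D=0$ directly, with no appeal to Weyl's theorem. Your symmetric case is also right: $D=E-E'$ satisfies $[F,D]=0$ and $[H,D]=2D$, exhibiting it as a lowest-weight vector of positive weight, which is impossible in a finite-dimensional module; and since only $F$ and $H$ enter these relations it is legitimate to use the adjoint module structure coming from the first triple. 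This is essentially the argument one finds when the result is proved explicitly (as in Proctor's paper), so your proposal both fills the gap the paper leaves to the references and does so by the expected mechanism.
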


For an $\mf{sl}_2$-representation $V$ and $V' \subseteq V$ a subspace (not necessarily a subrepresentation) we say $V'$ is \emph{weight symmetric} if 
\[
\dim(V' \cap V[k])=\dim(V' \cap V[-k])
\]
for all weights $k$.  The following elementary proposition will be useful in Section~\ref{sec:schubert}:

\begin{prop}
\label{prop:b-submodule}
Let $V$ be an $\mf{sl}_2$-representation and $V' \subseteq V$ a subspace, closed under the action of $\mf{b}^-$, which is weight symmetric.  Then $V'$ is closed under the action $\mf{sl}_2$ (that is, $V'$ is an $\mf{sl}_2$-subrepresentation of $V$).
\end{prop}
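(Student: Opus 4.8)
The plan is to argue by induction on $\dim V'$, peeling off one irreducible constituent from the top at each stage and using complete reducibility of finite-dimensional $\mf{sl}_2$-representations. Since $V'$ is stable under $h$, it decomposes as the direct sum of its weight spaces $V'_k \coloneqq V' \cap V[k]$, and by weight symmetry there is a top weight $m$ with $V'_k = 0$ for all $|k|>m$ and $\dim V'_m = \dim V'_{-m}$. If I can show that $e$ annihilates the top weight space $V'_m$, then each vector of $V'_m$ is a highest weight vector of weight $m$, and because $V'$ is closed under $f$ the subspace $W \coloneqq \bigoplus_{j=0}^{m} f^j(V'_m)$ is contained in $V'$ and is a genuine $\mf{sl}_2$-subrepresentation isomorphic to $V_{m+1}^{\oplus \dim V'_m}$.

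The hard part will be showing $eV'_m = 0$, and this is where weight symmetry enters essentially. I would pass to the smallest $\mf{sl}_2$-submodule $\tilde W$ containing $V'$; since $V'$ is already $\mf b^-$-stable, applying the generators repeatedly gives $\tilde W = \sum_{j \geq 0} e^j V'$. For any $k < -m$ one has $\tilde W \cap V[k] = \sum_{j\ge0} e^j V'_{k-2j} = 0$, because $V'$ vanishes in weights below $-m$; as $\tilde W$ is a genuine representation it is weight symmetric, so $\tilde W \cap V[k]=0$ for $k>m$ as well, and $m$ is the top weight of $\tilde W$. Consequently $e$ kills $\tilde W \cap V[m]$. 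Finally, from $\tilde W \cap V[-m] = V'_{-m}$ together with weight symmetry of both $\tilde W$ and $V'$ I get $\dim(\tilde W \cap V[m]) = \dim(\tilde W\cap V[-m]) = \dim V'_{-m} = \dim V'_m$, so the inclusion $V'_m \subseteq \tilde W\cap V[m]$ is an equality and therefore $eV'_m = 0$.

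With $W \cong V_{m+1}^{\oplus \dim V'_m}$ an $\mf{sl}_2$-subrepresentation contained in $V'$, complete reducibility lets me choose an $\mf{sl}_2$-stable complement $U$ with $V = W \oplus U$. Since $W\subseteq V'$, writing any $v \in V'$ as $v = w+u$ with $w\in W\subseteq V'$ forces $u \in V'\cap U$, so $V' = W \oplus (V'\cap U)$. The summand $V'\cap U$ is $\mf b^-$-stable (being the intersection of the $\mf b^-$-stable space $V'$ with the subrepresentation $U$) and weight symmetric (subtract the symmetric weight-space dimensions of $W$ from those of $V'$), and it has dimension strictly less than $\dim V'$ because $W\neq 0$. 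The inductive hypothesis, applied to the representation $U$, then shows $V'\cap U$ is closed under $e$; as $W$ is a subrepresentation it too is closed under $e$, and hence $eV' \subseteq W + (V'\cap U) = V'$, completing the induction with the trivial base case $V'=0$.
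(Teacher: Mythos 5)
Your proof is correct, and although it follows the same skeleton as the paper's (find the top weight $m$, show $eV'_m=0$, split off a subrepresentation of $V'$ containing the top weight space, and recurse), both key steps are carried out by genuinely different arguments. To kill $eV'_m$, the paper computes with the relations: weight symmetry plus $f$-stability give $f^{m+1}V'_m=0$; if some $v\in V'_m$ had $ev\neq 0$, then for $\ell$ maximal with $e^\ell v\neq 0$ the vector $e^\ell v$ is a highest weight vector of weight $m+2\ell$, so $f^{m+\ell+1}e^\ell v\neq 0$, contradicting that $f^\ell e^\ell v$ is a multiple of $v$ and hence killed by $f^{m+1}$. You argue structurally instead: the submodule $\tilde{W}=\sum_{j\geq 0}e^jV'$ generated by $V'$ has no weights below $-m$, hence (being a genuine representation, so weight symmetric) none above $m$, and the dimension count using $\tilde{W}\cap V[-m]=V'_{-m}$ pins down $\tilde{W}\cap V[m]=V'_m$, which $e$ must annihilate. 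This is a clean alternative; the one assertion there that deserves a line of justification is that $\sum_{j\geq 0}e^jV'$ really is the generated submodule, i.e.\ is $f$-stable, which follows from $fe^j=e^jf-e^{j-1}\left(jh+j(j-1)\right)$ together with the $\mf{b}^-$-stability of $V'$ (or from a PBW-type argument). For the descent, the paper avoids Weyl's complete reducibility theorem entirely: it sets $V''=\spn_{\C}\{f^jv_i\}\subseteq V'$ and repeats the whole argument for $V'/V''\subseteq V/V''$, quotienting rather than complementing; you instead split $V=W\oplus U$ by Weyl's theorem, observe $V'=W\oplus(V'\cap U)$, and induct on $\dim V'$ with ambient representation $U$. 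Both descents are valid: the paper's quotient route is more self-contained, using only the elementary facts about highest weight vectors quoted in Section~\ref{sec:background-sl2}, while your complement route keeps every space inside the original $V$ and makes the induction bookkeeping explicit, at the modest cost of invoking complete reducibility and the generated-submodule description.
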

\begin{proof}
Write $V'[k]$ for $V' \cap V[k]$.  Let $k_{max}$ be the largest weight such that $V'[k_{max}] \neq 0$ and let $v_1,\ldots,v_r$ be a basis for $V'[k_{max}]$.  It is a standard fact about $\mf{sl}_2$-representations that the linear map $f^{k}:V[k] \to V[-k]$ is an isomorphism; thus $\{f^{p}v_1, \ldots, f^{p}v_r\}$ is linearly independent for each $p=0,1,\ldots,k_{max}$.  By the weight symmetry of $V'$ and the maximality of $k_{max}$ we see that $V'[k]=0$ for $k<-k_{max}$, so $f^{k_{max}+1}v_i=0$ for all $i$.

Suppose that $ev_i \neq 0$, and let $\ell$ be the largest number so that $e^{\ell}v_i \neq 0$.  Then $e^{\ell}v_i$ is a highest weight vector of weight $k_{\max}+2\ell$, so $f^{k_{max}+2\ell}e^{\ell}v_i \neq 0$.  But repeated applications of the relation $[e,f]=h$ show that $f^{\ell}e^{\ell}v_i$ is a scalar multiple of $v_i$ and the previous paragraph then implies that $f^{k_{max}+1+\ell}e^{\ell}v_i=0$, a contradiction, thus $ev_i=0$.  It now follows that 
\[
V''=\spn_{\C}\{f^j v_i \: | \: i=1,\ldots, r, \: j=0,\ldots, k_{max} \} \subseteq V'
\]
is closed under the action of $\mf{sl}_2$.  Repeating this argument with $V'/V'' \subseteq V/V''$ rather than $V' \subseteq V$ completes the proof (this process clearly terminates since $V$ is finite-dimensional).
\end{proof}

\section{An $\mf{sl}_2$-action for weak order intervals} \label{sec:sl2}

Throughout this section we let $\pi \in S_n$ be a $132$-avoiding permutation and let $E,F,H \in \End(\C[e,\pi]_R)$ denote the associated linear operators defined in Section~\ref{sec:intro}.  

In order to prove Theorem~\ref{thm:sl2} we need to verify the following the relations:
\begin{align*}
    [H,E]&=2E, \\
    [H,F]&=-2F, \\
    [E,F]&=H,
\end{align*}
where $[X,Y]=XY-YX$ denotes the commutator.  The first two of these are automatic, since $E,F$ raise and lower length by one, respectively. We now focus on proving that $[E,F]=H$.  We view $[E,F]=EF-FE$ as a matrix with rows and columns indexed by the elements of $[e,\pi]_R$.  Propositions~\ref{prop:diag} and \ref{prop:off-diag} below together imply the desired relation and thus Theorem~\ref{thm:sl2}; the proofs of these propositions appear in Sections~\ref{sec:proof-of-diag} and \ref{sec:proof-of-off-diag}.

\begin{prop}
\label{prop:diag}
For any $\sigma \in [e,\pi]_R$ we have $(EF-FE)_{\sigma,\sigma}=2\ell(\sigma)-\ell(\pi)$.
\end{prop}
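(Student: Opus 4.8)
The plan is to compute the diagonal entry $(EF-FE)_{\sigma,\sigma}$ by tracking, for each simple transposition $s_i$ and each strong-order cover $t_{ij}$, the length-two paths $\sigma \to \sigma' \to \sigma$ that return to $\sigma$. Since $E$ raises length by one and $F$ lowers it by one, the only contributions to the $(\sigma,\sigma)$ entry of $EF$ come from first applying $F$ (descending along some $\sigma s_i \lessdot \sigma$ in the weak order) and then applying $E$ (ascending back to $\sigma$ via some strong cover); symmetrically for $FE$. So the diagonal entry is a difference of two explicit sums of weights, and I want to show this difference equals $2\ell(\sigma) - \ell(\pi)$.

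First I would write out the two contributions explicitly. The $FE$ term collects, over all strong covers $\sigma \precdot \sigma t_{ij} \leq \pi$, the product of $\wt^\pi(\sigma, \sigma t_{ij})$ (from $E$) with the coefficient from $F$ that sends $\sigma t_{ij}$ back down to $\sigma$. Here I must identify when $\sigma t_{ij}$ has $\sigma$ as a weak-order descent: this happens precisely when $t_{ij}$ is a simple transposition in the appropriate sense relative to $\sigma t_{ij}$, contributing a factor equal to the descent index. The $EF$ term symmetrically collects, over weak covers $\sigma s_i \lessdot \sigma$, the factor $i$ (from $F$) times $\wt^\pi(\sigma s_i, \sigma)$ (from $E$ climbing back up). The key bookkeeping is that each such return path corresponds to a single adjacent transposition, so both sums are indexed by the descents and ascents of $\sigma$ subject to the constraint of staying inside $[e,\pi]_R$.

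The heart of the argument is then a telescoping/weight-accounting identity: I expect that after substituting the definition (\ref{eq:def-of-wt}) of $\wt^\pi$ and the coefficient $i$, the difference $EF - FE$ on the diagonal collapses to a signed count of inversions. Concretely, I anticipate that the cardinality terms in $\wt^\pi$ — the counts $|\{k>j : \sigma_i < \sigma_k < \sigma_j\}|$ and the $\pi^{-1}$-analogue — are engineered exactly so that, summed over the relevant ascents and descents, they reconstruct $\ell(\sigma)$ on one side and $\ell(\pi)-\ell(\sigma)$ on the other, yielding $2\ell(\sigma) - \ell(\pi)$. The most natural way to see this is to partition pairs $(a,b)$ according to whether they are inversions of $\sigma$, of $\pi$, and how they interact with the chosen transposition, and to check that each pair contributes with the correct sign.

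\textbf{The main obstacle} will be the careful combinatorial matching between the two weight functions: showing that the ascent contributions (governed by the simple-reflection index $i$ in $F$) and the descent contributions (governed by $\wt^\pi$ in $E$) cancel except for a residue counting exactly the inversions of $\sigma$ versus the non-inversions forced by $\pi$. I expect this to require the $132$-avoidance of $\pi$ in an essential way — it is what guarantees that the strong-order covers below $\pi$ and the combinatorics of $\pi^{-1}$ behave compatibly, so that the second cardinality term in (\ref{eq:def-of-wt}) aggregates cleanly. Verifying that no boundary or degenerate cases (e.g.\ when a cover would leave the interval $[e,\pi]_R$) spoil the count is where the real care is needed; the rest is routine expansion.
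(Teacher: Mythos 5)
Your setup is correct and matches the paper's starting point: since the down weights of $F$ vanish on non-weak-order covers, only the weak-order edges at $\sigma$ contribute to the diagonal, so the proposition reduces to the identity
\[
\sum_{\sigma s_i \lessdot \sigma} i\,\wt^{\pi}(\sigma s_i,\sigma)\;-\;\sum_{\substack{\sigma \lessdot \sigma s_i \\ \sigma s_i \leq \pi}} i\,\wt^{\pi}(\sigma,\sigma s_i)\;=\;2\ell(\sigma)-\ell(\pi).
\]
But everything after this reduction is stated in the subjunctive (``I expect'', ``I anticipate''), and the part you dismiss as ``routine expansion'' is the entire content of the proposition. Two essential ingredients are missing. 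First, the concrete role of $132$-avoidance: the paper isolates it in a lemma saying that if $s_i$ is \emph{forbidden} (i.e.\ $\sigma \leq \pi$ but $\sigma s_i \notin [e,\pi]_R$), then necessarily $\ell(\sigma s_i)=\ell(\sigma)+1$ and both cardinality sets entering the weight (the set $A_i$ of later values lying between $\sigma_i$ and $\sigma_{i+1}$, and its $\pi^{-1}$-analogue $B_i$) are empty, because any element of either set would produce a $132$ pattern in $\pi$. Without this, you cannot control the ascents that exit the interval --- exactly the ``boundary case'' you flag but never resolve. Second, and more seriously, your proposed pair-partitioning argument ignores that every term carries the positional multiplier $i$, so the two sums are not plain counts of pairs, and a pair-by-pair sign count cannot produce the answer. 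The paper handles this with a telescoping device: for each value $j$ it builds a path through the points $(\sigma_i,\pi^{-1}\sigma_i)$, shows the signed entries in column $j$ alternate as the path enters and leaves a certain quadrant, and observes that a consecutive $(-1,+1)$ pair at positions $i<i'$ contributes $i'-i$, which counts lattice points; the column total (after a correction term coming from forbidden swaps) equals the number of non-inversions of $\pi$ with larger entry $j$, and summing over $j$ gives $\binom{n}{2}-\ell(\pi)$.

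Note also that the paper does not prove the identity directly: it computes the \emph{difference} between the general case and the known case $\pi=w_0$ established by Gaetz--Gao, so that only the discrepancy $\binom{n}{2}-\ell(\pi)$ has to be produced combinatorially. Your plan of a direct evaluation is not unreasonable, but it would in effect have to reprove the $w_0$ case along the way, and in either formulation one still needs a mechanism --- such as the telescoping along permutation paths --- that converts position-weighted signed sums into inversion counts. As written, your proposal identifies the correct reduction and the correct target, but supplies no proof of the identity itself.
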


\begin{prop}
\label{prop:off-diag}
For $\sigma,\tau \in [e,\pi]_R$ with $\sigma\ne \tau$, $(E F -FE)_{\sigma,\tau} = 0$.
\end{prop}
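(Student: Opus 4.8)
The plan is to reduce the statement to a single local identity and then verify it by a finite case analysis. Since $E$ raises length by one and $F$ lowers it by one, both $EF$ and $FE$ preserve rank, so $(EF-FE)_{\sigma,\tau}=0$ automatically unless $\ell(\sigma)=\ell(\tau)$; I assume this from now on. Reading $(M)_{\sigma,\tau}$ as the coefficient of $\sigma$ in $M\tau$ and writing out the two products, the contributions to $(EF)_{\sigma,\tau}$ come from \emph{lower paths} $\tau \gtrdot \mu \precdot \sigma$, in which $\tau \gtrdot \mu$ is a weak cover (so $\mu=\tau s_i$ for a right descent $s_i$ of $\tau$) and $\mu \precdot \sigma$ is a strong cover; such a path contributes $i\cdot\wt^{\pi}(\mu,\sigma)$. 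Symmetrically, the contributions to $(FE)_{\sigma,\tau}$ come from \emph{upper paths} $\tau \precdot \nu \gtrdot \sigma$, where $\tau \precdot \nu$ is a strong cover and $\nu \gtrdot \sigma$ is a weak cover $\nu=\sigma s_j$, contributing $\wt^{\pi}(\tau,\nu)\cdot j$. Thus Proposition~\ref{prop:off-diag} is equivalent to the identity
\[
\sum_{\mu}\, i(\tau,\mu)\,\wt^{\pi}(\mu,\sigma)\;=\;\sum_{\nu}\,\wt^{\pi}(\tau,\nu)\,j(\nu,\sigma),
\]
where $\mu$ (resp. $\nu$) ranges over the lower (resp. upper) paths above and $i(\tau,\mu)$, $j(\nu,\sigma)$ are the corresponding $F$-weights.

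Next I would exploit the fact that $\sigma$ and $\tau$ are joined by exactly one simple transposition and one transposition: since $\tau^{-1}\sigma$ is a product of two distinct transpositions, it is either a $3$-cycle or a product of two disjoint transpositions, so $\sigma$ and $\tau$ agree outside a set $S$ of three or four positions, and the entire identity is insensitive to the entries of the permutations away from $S$. I would therefore organize the verification as a case analysis indexed by the relative locations (disjoint, nested, or sharing an endpoint) of the adjacent pair $\{i,i+1\}$ of the weak move and the endpoints $k<l$ of the strong move, together with the relative order of the values carried there. In the generic cases, where the adjacent pair lies entirely to one side of, or disjoint from, the strong move, the two swaps commute: there is a transparent pairing $\mu \leftrightarrow \nu$ obtained by performing the same two swaps in the opposite order, and one checks directly that the integer $F$-weights agree and that the weights $\wt^{\pi}$ are literally equal, so the terms cancel in pairs.

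The technical heart is the interaction case, in which one of the positions $i,i+1$ lies strictly between $k$ and $l$. Here the cover conditions and, crucially, the weight $\wt^{\pi}$ genuinely change when the two swaps are reordered, and the correspondence between lower and upper paths need not be one-to-one. I would handle this by tracking, term by term in~(\ref{eq:def-of-wt}), how the two counting terms of $\wt^{\pi}$ --- the number of later positions whose value lies strictly between the two swapped values, and the number whose $\pi^{-1}$-value lies strictly between the $\pi^{-1}$-values of the two swapped values --- are redistributed between the lower and upper sides, together with the integer $F$-weights $i$ and $j$. The main obstacle is precisely this bookkeeping: showing that the two counting terms plus the $F$-weights rebalance to give equal sums, with no net contribution left over on either side.

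I expect the hypothesis that $\pi$ avoids $132$ to enter in an essential way exactly here. The interaction configurations that would spoil the balance --- an intermediate value, or an intermediate $\pi^{-1}$-position, sitting inside a swap in a way that creates or destroys a strong cover asymmetrically --- correspond to a forbidden $132$ pattern among the relevant entries or their $\pi^{-1}$-positions; the avoidance hypothesis rules these out, so that the redistribution is exact in every case. Summing the matched contributions then yields the identity above, and hence $(EF-FE)_{\sigma,\tau}=0$ for $\sigma\ne\tau$.
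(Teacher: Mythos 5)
Your reduction to matching weighted two-step paths is the same first move as the paper's, and your case division according to how the adjacent pair of the weak step overlaps the endpoints of the strong step is also the paper's organizing principle. But there is a genuine gap at the point where you say the correspondence between lower and upper paths ``need not be one-to-one'' and propose to restore the balance by redistributive bookkeeping. This is exactly backwards, and the two facts you are missing are the actual content of the proof. By Lemma 2.2 of \cite{sperner-paper} (a diamond-type lemma for a configuration of one weak and one strong cover), for fixed $\sigma \neq \tau$ there is \emph{at most one} lower path and \emph{at most one} upper path, and they exist as permutations simultaneously; moreover the matched paths use the \emph{same} simple transposition ($\beta = \tau s_m$ and $\alpha = \sigma s_m$), so the two $F$-weights are both equal to $m$ and never need to be traded off against the $\wt^{\pi}$'s. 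What remains is the single equality $\wt^{\pi}(\tau,\alpha) = \wt^{\pi}(\beta,\sigma)$, which must be checked in the five overlap cases. Note also that these weights are \emph{not} determined by the $3$--$4$ positions where $\sigma$ and $\tau$ differ: the two sets in (\ref{eq:def-of-wt}) run over all later positions $k>j$, so your claim that the identity is ``insensitive to the entries away from $S$'' cannot be assumed; the case analysis has to show that the two globally defined counted sets literally coincide, which is what the paper does.

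A second omission would by itself sink a naive matching: $E$ sums only over strong covers that remain inside $[e,\pi]_R$. When the lower path $\tau \gtrdot \beta \precdot \sigma$ exists, the diamond lemma produces $\alpha = \sigma s_m$ as a permutation, but one must still prove $\alpha \leq \pi$ (equivalently $\inv(\alpha)\subseteq\inv(\pi)$); otherwise the upper path could be missing from the sum and the identity would fail. This is Lemma~\ref{lem:existence-of-up-down}, proved by a separate inversion-set case analysis. Finally, your guess about where $132$-avoidance enters is misplaced: the paper's proof of this off-diagonal proposition uses only the characterization of weak order by containment of inversion sets (Proposition~\ref{prop:weak-characterized-by-inversions}) applied to $\sigma,\tau,\beta \leq \pi$; the pattern condition is what drives the \emph{diagonal} computation (Proposition~\ref{prop:diag}, via Lemma~\ref{lem:forbidden-label}), not this one. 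As written, the decisive steps of your argument --- the one-to-one matching, the membership of $\alpha$ in $[e,\pi]_R$, and the weight comparisons --- are all deferred to ``bookkeeping,'' so the proposal is an outline of the right shape rather than a proof.
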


Assuming these propositions, and thus Theorem~\ref{thm:sl2}, have been established, we now obtain Corollary~\ref{cor:sperner}, which states that $[e,\pi]_R$ is strongly Sperner: 

\begin{proof}[Proof of Corollary~\ref{cor:sperner}]
One of the fundamental observations in \cite{Stanley-hard-lefschetz} is that the strong Sperner property of a ranked poset $P=P_0 \sqcup \cdots \sqcup P_r$ is implied by the existence of a linear operator $\ph \in \End(\C P)$ sending elements $x \in P$ to linear combinations of elements covered by $x$, such that $\ph^{r-2i}:\C P_{r-i} \to \C P_i$ is an isomorphism for each $i$.  

It is a standard fact (see, e.g. \cite{Kirillov}) that for any $\mf{sl}_2$-representation $V$ the linear map $f^k: V[k] \to V[-k]$ is an isomorphism for all $k$.  Thus by Theorem~\ref{thm:sl2} the operator $F$ has the desired properties with respect to the poset $([e,\pi]_R, \leq)$.
\end{proof}

\subsection{Proof of Proposition \ref{prop:diag}}
\label{sec:proof-of-diag}

\begin{proof}[Proof of Proposition \ref{prop:diag}]

We will proceed by comparing the general case to the case $\pi=w_0$, which was established in \cite{sperner-paper}. We write $E$ for the raising operator associated to our fixed $132$-avoiding permutation $\pi$ and write $E_{w_0}$ when we mean to refer to the operator from \cite{sperner-paper}, where $\pi=w_0$.  The operator $F$ does not depend on $\pi$, except in determining its domain $\C [e,\pi]_R$; since $\C [e,\pi]_R$ is naturally a subspace of $\C S_n$, this should cause no ambiguity, and we do not distinguish notationally between the operators on $\C [e,\pi]_R$ and on $\C[e,w_0]_R = \C S_n$.

Gaetz and Gao \cite{sperner-paper} showed:
\[
(E_{w_0}F-FE_{w_0})_{\sigma,\sigma} =2\ell(\sigma)-\ell(w_0) = 2\ell(\sigma) -{n \choose 2}. 
\]
Subtracting, the desired equality in Proposition~\ref{prop:diag} is equivalent to the following: 
\begin{align}
\label{eq:difference}
    (E F-F E)_{\sigma,\sigma}-(E_{w_0}F-FE_{w_0})_{\sigma,\sigma} &= 2\cdot\ell(\sigma)-\ell(\pi)-\Bigg(2\cdot\ell(\sigma)-{n \choose 2}\Bigg)  \\
    \nonumber &= {n \choose 2}-\ell(\pi).
\end{align}
Note that the right-hand side does not depend on $\sigma$.  Fix for the remainder of this section a permutation $\sigma \in [e,\pi]_R$ and let $D$ denote the difference 
\[
(E F-FE)_{\sigma,\sigma}-(E_{w_0}F-FE_{w_0})_{\sigma,\sigma}
\]
on the left-hand side. We now show $D={n \choose 2}-\ell(\pi)$ using a combinatorial argument.

For permutations $\rho,\tau$ with $\rho\precdot\tau$ in the strong order, we say the \textit{up weight} of the edge between $\rho$ and $\tau$ is the coefficient of $\tau$ in $E\rho$, namely $\wt^{\pi}(\rho,\tau)$. The \textit{down weight} of the edge between $\rho$ and $\tau$ is the coefficient of $\rho$ in $F\tau$, namely $i$ if $\tau=\rho s_i$ and 0 if $\tau$ cannot be expressed in this form. 

For $1\le i\le n-1$, we say the swap $s_i$ is \textit{forbidden} if $\sigma s_i\not\in[e,\pi]_R,$ and otherwise is \emph{allowed}. Although $E$ respects the strong order, not the weak order, we only need to consider the weak order edges incident to $\sigma$ in order to compute $(E F)_{\sigma,\sigma}$ or $(FE)_{\sigma,\sigma}$, since the up weights on other edges are multiplied by the down weight of $0$.

Define the following sets used in computing the up weights:
\[
A_i(\sigma)=A_i=\{\sigma_k ~ \vert ~ k>i+1,\min{(\sigma_i,\sigma_{i+1})}<\sigma_k<\max{(\sigma_i,\sigma_{i+1})}\} 
\]
\begin{align*}
    B_i(\sigma,\pi)=B_i=\{\sigma_k ~\vert~ k>i+1,\min{(\pi^{-1}\sigma_i,\pi^{-1}\sigma_{i+1})}<&\pi^{-1}\sigma_k \\ <&\max{(\pi^{-1}\sigma_i,\pi^{-1}\sigma_{i+1})}\}.
\end{align*}
Note that $A_i(\sigma)=A_i(\sigma s_i)$ and $B_i(\sigma,\pi)=B_i(\sigma s_i,\pi)$.

\begin{lem}
\label{lem:forbidden-label}
Let $\sigma \in[e,\pi]_R$ and suppose $s_i$ is forbidden. Then $\ell(\sigma s_i) = \ell(\sigma)+1$ and $A_i=B_i=\emptyset.$ 
\end{lem}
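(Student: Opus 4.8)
The plan is to translate everything into statements about inversion sets and relative positions in $\pi$, and then to extract a forbidden $132$ pattern from any witness to $A_i \neq \emptyset$ or $B_i \neq \emptyset$. First I would dispose of the length claim: if $\sigma_i > \sigma_{i+1}$ then $\sigma s_i \lessdot \sigma$, so $\sigma s_i \leq \sigma \leq \pi$ and $\sigma s_i \in [e,\pi]_R$, contradicting that $s_i$ is forbidden; hence $\sigma_i < \sigma_{i+1}$ and $\ell(\sigma s_i) = \ell(\sigma)+1$. I would then record the precise meaning of ``forbidden'' via Proposition~\ref{prop:weak-characterized-by-inversions}: since $\sigma \lessdot \sigma s_i$ adds a single inversion, $\inv(\sigma s_i) = \inv(\sigma) \cup \{(\sigma_{i+1},\sigma_i)\}$, and $\inv(\sigma) \subseteq \inv(\pi)$, so $s_i$ is forbidden if and only if $(\sigma_{i+1},\sigma_i) \notin \inv(\pi)$, i.e.\ $\pi^{-1}(\sigma_i) < \pi^{-1}(\sigma_{i+1})$. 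This one inequality, together with the $132$-avoidance of $\pi$, drives both emptiness claims.

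For $A_i = \emptyset$, I would suppose $\sigma_k \in A_i$, so $k > i+1$ and $\sigma_i < \sigma_k < \sigma_{i+1}$. The pair $(\sigma_{i+1}, \sigma_k)$ is an inversion of $\sigma$ (the larger value sits in the earlier position $i+1$), hence lies in $\inv(\pi)$, giving $\pi^{-1}(\sigma_{i+1}) < \pi^{-1}(\sigma_k)$. Combined with the forbidden inequality this yields $\pi^{-1}(\sigma_i) < \pi^{-1}(\sigma_{i+1}) < \pi^{-1}(\sigma_k)$; reading off the values $\sigma_i, \sigma_{i+1}, \sigma_k$ at these three increasing positions of $\pi$ and using $\sigma_i < \sigma_k < \sigma_{i+1}$, one finds the relative order $1,3,2$, contradicting $132$-avoidance.

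For $B_i = \emptyset$, I would suppose $\sigma_k \in B_i$; by the forbidden inequality the defining condition reads $\pi^{-1}(\sigma_i) < \pi^{-1}(\sigma_k) < \pi^{-1}(\sigma_{i+1})$. If $\sigma_k < \sigma_{i+1}$ then $(\sigma_{i+1},\sigma_k) \in \inv(\sigma) \subseteq \inv(\pi)$ would force $\pi^{-1}(\sigma_{i+1}) < \pi^{-1}(\sigma_k)$, a contradiction, so in fact $\sigma_i < \sigma_{i+1} < \sigma_k$. The values $\sigma_i, \sigma_k, \sigma_{i+1}$ at the three increasing positions $\pi^{-1}(\sigma_i) < \pi^{-1}(\sigma_k) < \pi^{-1}(\sigma_{i+1})$ then again realize the pattern $1,3,2$ in $\pi$, a contradiction. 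Both cases close the lemma.

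The main obstacle is bookkeeping rather than any deep idea: one must correctly orient each pair as a value-inversion and keep straight which comparisons are hypotheses (the positions of $\sigma_i,\sigma_{i+1},\sigma_k$ along $\sigma$) versus conclusions forced inside $\pi$ through $\inv(\sigma) \subseteq \inv(\pi)$. The unifying observation is that the forbidden condition pins down the relative $\pi$-order of $\sigma_i$ and $\sigma_{i+1}$, so that any element of a nonempty $A_i$ or $B_i$ automatically supplies the third entry completing a $132$.
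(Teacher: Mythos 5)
Your proposal is correct and follows essentially the same route as the paper's proof: establish $\sigma_i < \sigma_{i+1}$ from the definition of forbidden, use Proposition~\ref{prop:weak-characterized-by-inversions} to get $\pi^{-1}(\sigma_i) < \pi^{-1}(\sigma_{i+1})$, and then extract a $132$ pattern in $\pi$ from any witness $\sigma_k$ to $A_i \neq \emptyset$ or $B_i \neq \emptyset$. Your write-up is slightly more explicit than the paper's (e.g.\ spelling out the contradiction forcing $\sigma_{i+1} < \sigma_k$ in the $B_i$ case), but the underlying argument is identical.
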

\begin{proof}
Since $\sigma \le \pi$, if $\sigma s_i < \sigma$ we would have $\sigma s_i \in [e,\pi]_R$; but by assumption, $s_i$ is forbidden, so we must have $\sigma < \sigma s_i$ and therefore $\ell(\sigma s_i)=\ell(\sigma)+1$.  We have $\sigma_i < \sigma_{i+1}$ so $\sigma$ does not have $(\sigma_i,\sigma_{i+1})$ as an inversion; by Proposition~\ref{prop:weak-characterized-by-inversions} and the fact that $s_i$ is forbidden, $(\sigma_{i+1},\sigma_i)$ is not an inversion of $\pi$.

We want to show that $A_i$ is empty. Suppose by way of contradiction that the value $\sigma_k$ is between $\sigma_i$ and $\sigma_{i+1}$ for some $k > i+1$. Then $\sigma$ inverts the values $\sigma_{i+1}$ and $\sigma_k$, and $\pi>\sigma$ so $\pi$ must also invert the values $\sigma_{i+1}$ and $\sigma_k$ by Proposition~\ref{prop:weak-characterized-by-inversions}. But this implies that the values $\sigma_i, \sigma_{i+1}, \sigma_k$ form an occurrence of the pattern $132$ in $\pi$, a contradiction, since $\pi$ is assumed to avoid $132$. Thus $A_i=\emptyset$. 

To show $B_i$ is empty, suppose by way of contradiction for some $k > i+1$ that $\pi^{-1} \sigma_k$ is between $\pi^{-1} \sigma_i$ and $\pi^{-1} \sigma_{i+1}.$ Since $\sigma_k$ appears after $\sigma_{i+1}$ in $\sigma$ but before it in $\pi > \sigma$, it must be that $\sigma_{i+1} < \sigma_k.$ Then the values $\sigma_i, \sigma_k, \sigma_{i+1}$ form a $132$ pattern in $\pi$, a contradiction, so $B_i=\emptyset.$
\end{proof}

If the swap $s_i$ is allowed for $\sigma \leq \pi$, then either $\sigma \lessdot \sigma s_i$, so $s_i$ corresponds to an upward edge from $\sigma$ and this edge contributes to $(-FE )_{\sigma,\sigma}$ and $(-FE_{w_0})_{\sigma,\sigma}$, or else $\sigma s_i \lessdot \sigma$ and $s_i$ is a downward edge contributing to $(E F)_{\sigma,\sigma}$ and $(E_{w_0}F)_{\sigma,\sigma}$). Regardless of the direction, the down weight of this edge is $i$. The up weight in $E_{w_0}$ is $1+2|A_i|$ and $1+|A_i|+|B_i|$ in $E$.  Therefore
\begin{equation}
\label{eq:D-as-sum}
    D  = \sum_{i=1}^{n-1} \varepsilon_i,
\end{equation}
where
\[
\varepsilon_i=\begin{cases} i, &\text{$s_i$ forbidden,} \\ i(|A_i| - |B_i|), &\text{$s_i$ allowed and $\sigma \lessdot \sigma s_i$,} \\ i(|B_i|-|A_i|), &\text{$s_i$ allowed and $\sigma s_i \lessdot \sigma$.}\end{cases}
\]


We illustrate this information in a structure called a \textit{sign grid} with rows labeled $1,2,\ldots,n-1$ and columns labeled $0,1,\ldots,n$ (see Figure~\ref{fig:sign-grid}). We write $S_{ij}$ for the entry in the $i$-th row and $j$-th column, these are $+1$, $-1$, or $0$ (empty) as follows:
\begin{itemize}
    \item The cell $S_{i0}$ is $+1$ if swap $s_i$ is forbidden, and $0$ if $s_i$ is allowed. 
    \item For $1\le j\le n$, $S_{ij}=\lambda_i \mu_j$, where 
\begin{align*}
&\lambda_i=\begin{cases} +1, &\text{if $\sigma \lessdot \sigma s_i$} \\ -1, &\text{if $\sigma s_i \lessdot \sigma$} \end{cases}
&\mu_j=\begin{cases} +1, &\text{if $j\in A_i(\sigma)\setminus B_i(\sigma)$} \\ -1, &\text{if $j\in B_i(\sigma)\setminus A_i(\sigma)$} \end{cases}
\end{align*}
    \item All other entries are $0$.
\end{itemize}   

The formula (\ref{eq:D-as-sum}) can now be rewritten as 
\begin{equation}
    \label{eq:D-from-grid}
    D = \sum_{i,j} i S_{ij}.
\end{equation}
To compute $D$, we will first sum by columns. The desired value ${n \choose 2}-\ell(\pi)$ for $D$ is the number of non-inverted pairs in $\pi$. These pairs can be grouped based on the larger number in each pair. We claim that for each positive $j$, the sum of the $j$-th column $\sum_i iS_{ij}$ is the number of non-inverted pairs with $j$ the larger number in each pair, minus a correction term coming from the entries in column $0$. 

To prove these claims, we consider a \textit{permutation path} within $[n] \times [n]$ for each column $j$, constructed as follows: Let $k=\sigma^{-1}j$ and for $1\le i\le k-1$, draw a vector from point $(\sigma_i,\pi^{-1}\sigma_i)$ to $(\sigma_{i+1},\pi^{-1}\sigma_{i+1})$. We also consider the lines $x=\sigma_k$ and $y=\pi^{-1}\sigma_k$, dividing the plane into four quadrants numbered counterclockwise from the top right (see Figure~\ref{fig:permutation-path}).

\begin{figure}
\tikzset{ 
    table/.style={
        matrix of nodes,
        row sep=-\pgflinewidth,
        column sep=-\pgflinewidth,
        nodes={
            rectangle,
            draw=black,
            align=center
        },
        minimum height=1.5em,
        text depth=0.5ex,
        text height=2ex,
        nodes in empty cells,
        every even row/.style={
            nodes={fill=gray!20}
        },
        column 1/.style={
            nodes={text width=2em,font=\bfseries}
        },
        row 1/.style={
            nodes={
                fill=black,
                text=white,
                font=\bfseries
            }
        }
    }
}
\begin{tikzpicture}
\matrix (first) [table,text width=1.5em]
{
  & 0 & 1 & 2 & 3 & 4 & 5 & 6 & 7 & 8 \\
1 &   &   &   &   &   &   &   &   &    \\
2 &   &   &   &   & +1 &  & -1 & -1 &    \\
3 & +1 &   &   &   &    &   &   &   &    \\
4 &    &   &   &   &   &   &   &  +1 &      \\
5 &   &   &   &   &   &   &   &   &     \\
6 &   &   &   &   &   &   &   &   &    \\
7 & +1 &   &   &   &   &   &   &   &    \\
};
\end{tikzpicture}
\caption{Sign grid for $\pi=56732418, ~\sigma = 32564178$. }
\label{fig:sign-grid}
\end{figure}

\begin{figure}
\begin{tikzpicture}[scale=1]
\draw   (0,0) -- (8,0)
        (0,0) -- (0,8);
\foreach \x in {0,...,8}
     \draw (\x,3pt) -- (\x,-3pt)
        node[anchor=north] {\x};
\foreach \y in {0,...,8}
     \draw (3pt,\y) -- (-3pt,\y)
        node[anchor=east] {\y};
\draw[->](3,4) -- (2,5);
\draw[->](2,5) -- (5,1);
\draw[->](5,1) -- (6,2);
\draw[->](6,2) -- (4,6);
\draw[->](4,6) -- (1,7);
\draw[->](1,7) -- (7,3);
\draw[dotted,red](0,3) -- (8,3);
\draw[dotted,red](7,0) -- (7,8);
\node[above right=0pt of {(7.2,7)}] {I};
\node[above right=0pt of {(0.2,7)}] {II};
\node[above right=0pt of {(0.2,0.5)}] {III};
\node[above right=0pt of {(7.2,0.5)}] {IV};
\node[above right=0pt of {(-1,4)}] {$y$};
\node[above right=0pt of {(4,-1)}] {$x$};
\end{tikzpicture}
\caption{Permutation path for $\pi=56732418$, $\sigma=32564178,$ and  $c=7=\sigma_7$. Dotted lines $x=\sigma_7,y=\pi^{-1}\sigma_7$ are shown, dividing the region into Quadrants I-IV.}
\label{fig:permutation-path}
\end{figure}

We prove a few properties of the permutation path relating to these quadrants, helping us explain some patterns in the sign grid.

\begin{lem}
\label{lem:perm-path-facts} 
Let the permutation path and quadrants be constructed as above, with respect to $k=\sigma^{-1} j$.
\begin{itemize}
    \item[(a)] Steps in the permutation path are left-to-right if the corresponding swap is length-increasing, and right-to-left if the corresponding swap is length-decreasing. The edge $(\sigma_i,\pi^{-1}\sigma_i) \to (\sigma_{i+1},\pi^{-1}\sigma_{i+1})$ crosses the line $x=\sigma_k$ when $k \in A_i$, and it crosses $y=\pi^{-1}\sigma_k$ when $k \in B_i$. 
    \item[(b)] No point in the permutation path is in the interior of Quadrant I.
    \item[(c)] No vector in the permutation path points down and to the left.
    \item[(d)] A vector points up and to the right if and only if it corresponds to a forbidden swap. If a vector corresponds to a forbidden swap, the vector does not change quadrants. 
    \item[(e)] A vector $(\sigma_i,\pi^{-1}\sigma_i)\to(\sigma_{i+1},\pi^{-1}\sigma_{i+1})$ crossing from Quadrant II to III must point down-right and $S_{ij}=-1$. Likewise, a vector $(\sigma_i,\pi^{-1}\sigma_i)\to(\sigma_{i+1},\pi^{-1}\sigma_{i+1})$ crossing from Quadrant III to II must point up-left and $S_{ij}=1$. 
    \item[(f)] A vector $(\sigma_i,\pi^{-1}\sigma_i)\to(\sigma_{i+1},\pi^{-1}\sigma_{i+1})$ crossing from Quadrant IV to III must point up-left and $S_{ij}=-1$. Likewise, a vector $(\sigma_i,\pi^{-1}\sigma_i)\to(\sigma_{i+1},\pi^{-1}\sigma_{i+1})$ crossing from Quadrant III to IV must point down-right and $S_{ij}=1$.
    \item[(g)] If a vector starts and ends in the same quadrant, or goes between even quadrants, the corresponding sign grid entry $S_{ij}$ is $0$.
\end{itemize}
\end{lem}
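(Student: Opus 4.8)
The plan is to reduce every part of the lemma to a single geometric dictionary together with two facts already available: the inversion-containment description of the weak order (Proposition~\ref{prop:weak-characterized-by-inversions}) and Lemma~\ref{lem:forbidden-label}. Fixing the column $j$ and writing $k=\sigma^{-1}j$ so that $\sigma_k=j$, I would first record the dictionary, which is essentially part~(a): a path point $(\sigma_i,\pi^{-1}\sigma_i)$ lies strictly right of the line $x=\sigma_k$ iff $\sigma_i>\sigma_k$ and strictly above $y=\pi^{-1}\sigma_k$ iff $\pi^{-1}\sigma_i>\pi^{-1}\sigma_k$; step $i$ moves rightward iff $\sigma_i<\sigma_{i+1}$, which by the definition of $\lambda_i$ is iff $s_i$ is length-increasing; and step $i$ crosses $x=\sigma_k$ iff $\min(\sigma_i,\sigma_{i+1})<\sigma_k<\max(\sigma_i,\sigma_{i+1})$, which (since $k>i+1$ for every step but the last) is exactly the condition $\sigma_k\in A_i$, with the analogous statement for $y=\pi^{-1}\sigma_k$ and $\sigma_k\in B_i$. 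Recording (a) first is worthwhile because each remaining part is read off directly from it.

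Parts (b), (c), and (d) then follow from the monotonicity forced by $\sigma\le\pi$: for positions $p<q$, if $\sigma_p>\sigma_q$ then $(\sigma_p,\sigma_q)\in\inv(\sigma)\subseteq\inv(\pi)$ and hence $\pi^{-1}\sigma_p<\pi^{-1}\sigma_q$. Taking $(p,q)=(i,k)$ shows that a point right of $x=\sigma_k$ must lie below $y=\pi^{-1}\sigma_k$, so no point sits in the interior of Quadrant~I, giving (b); taking $(p,q)=(i,i+1)$ shows that a leftward step is always upward, giving (c). For the equivalence in (d), I would observe that a length-decreasing swap keeps $\sigma s_i$ below $\sigma\le\pi$, so every forbidden swap is length-increasing, and that $\inv(\sigma s_i)$ exceeds $\inv(\sigma)$ only by the pair $(\sigma_{i+1},\sigma_i)$; Proposition~\ref{prop:weak-characterized-by-inversions} then makes $s_i$ forbidden iff $\sigma_i<\sigma_{i+1}$ and $\pi^{-1}\sigma_i<\pi^{-1}\sigma_{i+1}$, i.e.\ iff step $i$ points up and to the right. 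The second half of (d) uses Lemma~\ref{lem:forbidden-label}: a forbidden step has $A_i=B_i=\emptyset$, so by (a) it crosses neither dividing line and hence cannot change quadrants.

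Finally, parts (e)--(g) are a short sign computation built from (a), (c), and (d). In (e) and (f) the step crosses exactly one dividing line, so $\sigma_k$ lies in exactly one of $A_i,B_i$ and $\mu_j$ is thereby determined; its direction of motion is pinned down by (c) (a leftward step must be up-left) and (d) (an up-right step would be forbidden, which a genuine crossing excludes), determining $\lambda_i$, and multiplying gives the asserted value of $S_{ij}=\lambda_i\mu_j$. In (g), a step returning to its starting quadrant crosses neither line, so $\sigma_k\notin A_i\cup B_i$ and $\mu_j=0$, whereas a step between the two even quadrants crosses both lines, so $\sigma_k\in A_i\cap B_i$ and again $\mu_j=0$. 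I expect no conceptual obstacle here; the only points demanding care are the consistent treatment of \emph{strict} crossing (so that the boundary case of the final step, which ends exactly at the center $(\sigma_k,\pi^{-1}\sigma_k)$ on both lines, is handled by the condition $k>i+1$ built into $A_i$ and $B_i$) and bookkeeping the crossing/direction combinations in (e)--(g) without sign error.
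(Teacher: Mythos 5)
Your proposal is correct and follows essentially the same route as the paper's proof: part (a) as a coordinate dictionary, parts (b)--(d) deduced from inversion containment (Proposition~\ref{prop:weak-characterized-by-inversions}) together with Lemma~\ref{lem:forbidden-label}, and parts (e)--(g) as sign bookkeeping combining (a), (c), and (d). Your explicit treatment of the final step ending at the center $(\sigma_k,\pi^{-1}\sigma_k)$ is a minor point of extra care that the paper leaves implicit.
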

\begin{proof}
\text{}
\begin{itemize}
    \item[(a)] The $x$-coordinate increases if $\sigma_i < \sigma_{i+1}$, in which case $\sigma < \sigma s_i$. The edge crosses $x = \sigma_k$ when $\sigma_k$ is between $\sigma_i$ and $\sigma_{i+1}$, which means $k \in A_i.$ Similarly, the edge crosses $y=\pi^{-1} \sigma_k$ when $\pi^{-1}\sigma_k$ is between $\pi^{-1}\sigma_i$ and $\pi^{-1}\sigma_{i+1}$.
    \item[(b)] Suppose there exists some $(\sigma_i,\pi^{-1}\sigma_{i})$ in Quadrant I. Then $i<k$ and $\sigma_k<\sigma_i$, so $\sigma$ inverts the values $\sigma_i$ and $\sigma_k$. On the other hand, $\pi^{-1}\sigma_i>\pi^{-1}\sigma_k$, so $\pi$ does not invert the values $\sigma_i$ and $\sigma_k$, but this contradicts the fact that $\sigma \leq \pi$.
    \item[(c)] Suppose there exist some points $(\sigma_i,\pi^{-1}\sigma_{i})$ and $(\sigma_{i+1},\pi^{-1}\sigma_{i+1})$ with $\sigma_{i+1}<\sigma_i$ and $\pi^{-1}\sigma_{i+1}<\pi^{-1}\sigma_i$. Proceed with the argument in part (b), but replace $i+1$ with $k$.
    \item[(d)] In the ``only-if" direction: If $\sigma_i < \sigma_{i+1}$ and $\pi^{-1} \sigma_i < \pi^{-1}\sigma_{i+1}$ then both $\sigma$ and $\pi$ do not invert the values $\sigma_i$ and $\sigma_{i+1}$, so the swap $s_i$ is forbidden. Conversely, if the swap $s_i$ is forbidden, then the values $\sigma_i,\sigma_{i+1}$ are inverted neither in $\sigma$ nor in $\pi$, so $\sigma_i<\sigma_{i+1}$ and $\pi^{-1}\sigma_i<\pi^{-1}\sigma_{i+1}$, and the vector $(\sigma_i,\pi^{-1}\sigma_i)\to(\sigma_{i+1},\pi^{-1}\sigma_{i+1})$ points up-right. If a swap $s_i$ is forbidden, then $A_i=B_i=\emptyset$ by Lemma~\ref{lem:forbidden-label}, so the vector can cross neither $x=\sigma_k$ nor $y=\pi^{-1}\sigma_k$ by part (a). 
    \item[(e)] Consider $(\sigma_i,\pi^{-1}\sigma_{i})$ in Quadrant II and $(\sigma_{i+1},\pi^{-1}\sigma_{i+1})$ in Quadrant III. The vector between these two points must point down, and cannot point down-left, so must point down-right. Thus, $\sigma_i<\sigma_{i+1}<\sigma_k$ and $\pi^{-1}\sigma_{i+1}<\pi^{-1}\sigma_k<\pi^{-1}\sigma_i$. So $\sigma\lessdot \sigma s_i$, and $\sigma_k\in B_i\setminus A_i$. Thus, $\sigma_k$ decreases the sum $D$ and $S_{i \sigma_k}=-1$. 

    Likewise, consider $(\sigma_i,\pi^{-1}\sigma_{i})$ in Quadrant III and $(\sigma_{i+1},\pi^{-1}\sigma_{i+1})$ in Quadrant II. The vector between these two points must point up, and cannot point up-right since it crosses quadrants, so must point up-left. Thus, $\sigma_{i+1}<\sigma_i<\sigma_k$ and $\pi^{-1}\sigma_{i}<\pi^{-1}\sigma_k<\pi^{-1}\sigma_{i+1}$. So $\sigma s_i\lessdot \sigma$, and $\sigma_k\in B_i\setminus A_i$. Thus, $\sigma_k$ increases the sum $D$ and $S_{i \sigma_k} = +1$. 
    \item[(f)] This is analogous to part (e).
    
    \item[(g)] Consider the vector for the swap $s_i$ in column $\sigma_k$. If this vector has endpoints in the same quadrant, then $\sigma_k$ is in neither $A_i$ nor $B_i$ so $S_{i\sigma_k}=0.$  If the vector moves between Quadrants II and IV, then the vector crosses both $x=\sigma_k$ and $y=\pi^{-1}\sigma_k,$ so $\sigma_k$ is in both $A_i$ and $B_i$ so $S_{i\sigma_k}=0.$ 
\end{itemize}
\end{proof}

\begin{lem}
\label{lem:correction-term}
\text{}
\begin{itemize}
    \item[(a)] The sum $\sum_i iS_{i\sigma_k}$ for column $\sigma_k$ is the number of points in Quadrant III plus a correction term, $-(k-1)S_{(k-1)0}$.
    \item[(b)] For a permutation path corresponding to column $j=\sigma_k$, the number of points in Quadrant III is the number of elements in $\pi$ that are less than $\sigma_k$ and not inverted with $\sigma_k$.
\end{itemize}
\end{lem}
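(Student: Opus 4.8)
The plan is to prove part (b) first, as it is the cleaner of the two, and then deduce part (a) from the quadrant-crossing dictionary supplied by Lemma~\ref{lem:perm-path-facts} together with a summation-by-parts.

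For part (b), I would note that a point $p_i \coloneqq (\sigma_i, \pi^{-1}\sigma_i)$ of the path (with $1 \le i \le k-1$) lies in Quadrant III exactly when $\sigma_i < \sigma_k$ and $\pi^{-1}\sigma_i < \pi^{-1}\sigma_k$. Thus Quadrant III points correspond to positions $i < k$ whose value $w = \sigma_i$ satisfies $w < \sigma_k$ and $\pi^{-1}(w) < \pi^{-1}(\sigma_k)$, i.e., $w$ is smaller than $\sigma_k$ and appears before $\sigma_k$ in $\pi$ (is not inverted with $\sigma_k$ in $\pi$). The only gap between this and the claimed count is the positional restriction $i < k$, and I would show it is automatic: if some value $w < \sigma_k$ with $\pi^{-1}(w) < \pi^{-1}(\sigma_k)$ occurred after $\sigma_k$ in $\sigma$, then $(\sigma_k, w)$ would be an inversion of $\sigma$ but not of $\pi$, contradicting $\sigma \le \pi$ via Proposition~\ref{prop:weak-characterized-by-inversions}. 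Hence every value counted in (b) sits at a position $< k$ and contributes a Quadrant III point, giving the stated equality.

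For part (a), the engine is the observation that the sign $S_{i\sigma_k}$ records whether the $i$-th edge enters or leaves Quadrant III. Writing $\chi_i = 1$ if $p_i$ is in Quadrant III and $\chi_i = 0$ otherwise, I would read off from Lemma~\ref{lem:perm-path-facts}(e),(f),(g)---together with part (b) of that lemma, which forbids Quadrant I---that $S_{i\sigma_k} = \chi_i - \chi_{i+1}$ for every edge: the value $+1$ occurs exactly when the edge leaves III (cases III$\to$II and III$\to$IV), the value $-1$ exactly when it enters III (II$\to$III and IV$\to$III), and all remaining endpoint pairs give $0$. Since $\sigma_k$ can lie in $A_i \cup B_i$ only when $k > i+1$, the entry $S_{i\sigma_k}$ vanishes for $i \ge k-1$, so $\sum_i i S_{i\sigma_k} = \sum_{i=1}^{k-2} i(\chi_i - \chi_{i+1})$. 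Applying summation by parts collapses this weighted telescope to $\sum_{i=1}^{k-1}\chi_i - (k-1)\chi_{k-1}$, whose first term is precisely the number of Quadrant III points. It remains to identify the boundary term $\chi_{k-1}$ with $S_{(k-1)0}$: by Lemma~\ref{lem:perm-path-facts}(d) the final edge $p_{k-1} \to p_k$ points up-and-to-the-right (equivalently $p_{k-1}$ lies in Quadrant III) if and only if the swap $s_{k-1}$ is forbidden, which is exactly the condition $S_{(k-1)0} = 1$. Substituting $\chi_{k-1} = S_{(k-1)0}$ yields the promised correction $-(k-1)S_{(k-1)0}$ and completes part (a).

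The main obstacle is the bookkeeping in part (a): correctly matching each endpoint-pair case to the value of $S_{i\sigma_k}$ via the previous lemma, and---most delicately---pinning down the boundary term produced by summation by parts and recognizing it as the column-$0$ correction. Part (b) is comparatively routine once the automatic positional constraint coming from $\sigma \le \pi$ is observed.
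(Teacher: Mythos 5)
Your proposal is correct, and it rests on the same ingredients as the paper's proof: part (b) is essentially verbatim the paper's argument (the positional constraint $i<k$ is redundant because a value $w<\sigma_k$ appearing after $\sigma_k$ in $\sigma$ but before it in $\pi$ would be an inversion of $\sigma$ not present in $\pi$), and part (a) uses exactly the dictionary of Lemma~\ref{lem:perm-path-facts}(b),(e),(f),(g) together with the identification of the column-$0$ entry via part (d). Where you differ is the bookkeeping in part (a). The paper tracks entries and exits of Quadrant III, pairs each $-1$ with the following $+1$ so that each closed visit contributes its length, and then splits into two cases according to whether $(\sigma_{k-1},\pi^{-1}\sigma_{k-1})$ lies in Quadrant III: if so, the last visit is unclosed and appending a $+1$ of weight $k-1$ is exactly compensated by the correction term; if not, the correction vanishes. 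You instead package the same facts into the single identity $S_{i\sigma_k}=\chi_i-\chi_{i+1}$ --- the column of the sign grid is the discrete derivative of the Quadrant III indicator along the path --- after which summation by parts yields the count of Quadrant III points plus the boundary term $-(k-1)\chi_{k-1}$ mechanically, and Lemma~\ref{lem:perm-path-facts}(d) identifies $\chi_{k-1}$ with $S_{(k-1)0}$. Your route buys a uniform treatment with no case split; its obligations are to check that the dictionary covers every endpoint configuration (it does: Quadrant I is excluded by part (b), and same-quadrant or II$\leftrightarrow$IV transitions give $0$ by part (g)) and to note the vanishing $S_{i\sigma_k}=0$ for $i\ge k-1$ (since $\sigma_k\in A_i\cup B_i$ forces $k>i+1$), which you do and which the paper leaves implicit. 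Both proofs are sound; yours is arguably the cleaner formalization of the same idea.
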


\begin{proof}

As we move along the permutation path for column $\sigma_k$, the signs of nonzero elements of the sequence $\{S_{i\sigma_k}\}_i$ alternate, with a $+1$ every time we leave Quadrant III, and a $-1$ every time we enter, by parts (e) and (f) of Lemma ~\ref{lem:perm-path-facts}. Pair these with $-1$ followed by $+1$. Each time we leave Quadrant III, the sum of the previous two terms $i(-1) + i'(+1)$ (or just the previous term, if the path started in Quadrant III and this is the first exit) counts the length of that visit to Quadrant III, $i'-i$. 

Suppose $(\sigma_{k-1},\pi^{-1}\sigma_{k-1})$ is not in Quadrant III. Then every point in Quadrant III is followed by an exit, so all are counted by the sum of consecutive $-1,+1$ pairs in nonzero entries of $\{S_{i\sigma_k}\}_i$. So, $\sum_i iS_{i\sigma_k}$ is the number of points in Quadrant III. 

Because $(\sigma_{k-1}, \pi^{-1} \sigma_{k-1})$ is not in Quadrant III, $(\sigma_k,\pi^{-1}\sigma_k)$ is not to the upper right of it, so $s_{k-1}$ is not forbidden by part (d) of Lemma ~\ref{lem:perm-path-facts} and $S_{(k-1)0}=0$. Thus the correction term is $0$. The number of points in Quadrant III is $\sum_i iS_{i\sigma_k} = \sum_i iS_{i\sigma_k} + (k-1)S_{(k-1)0}.$

Suppose $(\sigma_{k-1},\pi^{-1}\sigma_{k-1})$ is in Quadrant III. Then the last entrance to Quadrant III (if any) is an unpaired $-1$. If we append a $+1$ at the end with weight $(k-1)$ to complete the pair, then every point in Quadrant III is counted.

Since $(\sigma_{k-1},\pi^{-1}\sigma_{k-1})$ is in Quadrant III, $(\sigma_k,\pi^{-1}\sigma_k)$ is up and to the right, so $s_{k-1}$ is a forbidden swap and $S_{(k-1)0}=+1$ by part (d) of Lemma ~\ref{lem:perm-path-facts}. Thus the number of points in Quadrant III is $\sum_i i S_{i \sigma_k} + (k-1) = \sum_i iS_{i\sigma_k} + (k-1)S_{(k-1)0},$ proving (a).

Quadrant III contains points $(\sigma_i,\pi^{-1}\sigma_i)$ such that $\sigma_i<\sigma_k,\pi^{-1}\sigma_i<\pi^{-1}\sigma_k,$ and $i<k$. We show this third condition is redundant given the first two.

Suppose that $(\sigma_i,\pi^{-1}\sigma_i)$ such that $\sigma_i<\sigma_k,\pi^{-1}\sigma_i<\pi^{-1}\sigma_k,$ and $i>k$. Then, the values $\sigma_i$ and $\sigma_k$ are inverted in $\sigma$ but not in $\pi$, but $\pi\ge \sigma$, a contradiction. So the points in Quadrant III are all $(\sigma_i,\pi^{-1}\sigma_i)$ such that $\sigma_i<\sigma_k,\pi^{-1}\sigma_i<\pi^{-1}\sigma_k$, proving (b).
\end{proof}

Now we can evaluate $D$ and prove the proposition. Writing $k=\sigma^{-1}j$, we have: 

\begin{align*}
    D = & \sum_{i=1}^{n-1} \sum_{j=0}^{n} i S_{ij}\\
    = & \sum_{j=1}^{n} \left((k-1)S_{(k-1) 0}+ \sum_{i=1}^{n-1} iS_{ij}  \right)\\ 
    = & \sum_{j=1}^n \left|\{i~\vert~i<j,\pi^{-1}i<\pi^{-1}j \}\right|\\
    = & {n \choose 2} - \left|\inv(\pi)\right| \\
    = & {n \choose 2} - \ell(\pi).
\end{align*}
where the first three equalities follow from (\ref{eq:D-from-grid}) and Lemma~\ref{lem:correction-term} (a) and (b), respectively.  Since 
\[
(E F-F E)_{\sigma,\sigma} - (E_{w_0}F-FE_{w_0})_{\sigma,\sigma} = D = {n\choose 2}-\ell(\pi),
\]
we have computed that $(E F-F E)_{\sigma,\sigma} = 2 \ell(\sigma)-\ell(\pi)$ as desired.
\end{proof}

\subsection{Proof of Proposition \ref{prop:off-diag}}
\label{sec:proof-of-off-diag}
Throughout this section, let $\sigma \neq \tau$ be distinct elements of $[e,\pi]_R$.

\begin{proof}[Proof of Proposition \ref{prop:off-diag}]

It suffices to check cases where $(E F)_{\sigma,\tau}\neq 0$ or where $(FE)_{\sigma,\tau}\neq 0$.  We will see that in either case the two are equal, and thus that $(EF-FE)_{\sigma, \tau}=0$.

\begin{figure}
    \centering
    \begin{tikzpicture}[scale=1.5]
    \node (tau) at (0,0) {$\tau$};
    \node (sigma) at (1,0) {$\sigma$};
    \node (alpha) at (0.5,1) {$\alpha=\sigma s_m$};
    \node (beta) at (0.5,-1) {$\tau s_m =\beta=\sigma t_{ij}$};
    \draw[dashed] (tau)--(alpha);
    \draw[dashed] (beta)-- node[right] {$t_{ij}$} (sigma);
    \draw (tau)-- node[left] {$s_m$} (beta);
    \draw (alpha)-- node[right] {$s_m$} (sigma);
    \end{tikzpicture}
    \caption{The elements $\sigma, \tau, \alpha, \beta$ used in the proofs of Lemma~\ref{lem:existence-of-up-down} and Proposition~\ref{prop:off-diag}.  The dashed edges indicate cover relations in the strong order and the solid edges cover relations in the weak order.}
    \label{fig:up-down-path}
\end{figure}

\begin{lem}
\label{lem:existence-of-up-down}
There is an element $\alpha \in [e,\pi]_R$ such that $\tau \precdot \alpha \gtrdot \sigma$ if and only if there is an element $\beta \in [e,\pi]_R$ such that $\tau \gtrdot \beta \precdot \sigma$; the elements $\alpha, \beta$ are unique if they exist (see Figure~\ref{fig:up-down-path}).
\end{lem}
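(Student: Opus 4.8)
The plan is to show that both configurations are governed by a single simple reflection $s_m$: the weak-order edges $\sigma \lessdot \alpha$ and $\beta \lessdot \tau$ will use the \emph{same} index $m$, and passing between the two pictures in Figure~\ref{fig:up-down-path} amounts to conjugating the strong-order transposition by $s_m$. First I would record that each of the two conditions forces $\ell(\sigma)=\ell(\tau)$ (compare lengths along the edges: $\ell(\alpha)=\ell(\tau)+1=\ell(\sigma)+1$ in one case, $\ell(\beta)=\ell(\tau)-1=\ell(\sigma)-1$ in the other), so the statement is vacuous unless $\sigma,\tau$ have equal length; assume this henceforth.

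For the forward direction, suppose $\alpha$ exists and write $\alpha=\sigma s_m$ (the weak cover, so $\sigma_m<\sigma_{m+1}$) and $\alpha=\tau t_{ab}$ (the strong cover). I set $\beta\coloneqq \tau s_m$. The key algebraic identity is $\beta=\tau s_m=\sigma s_m t_{ab}s_m=\sigma\,t_{s_m(a),s_m(b)}$, using $s_m t_{ab}s_m=t_{s_m(a),s_m(b)}$, so that $\sigma$ and $\beta$ differ by a single transposition. I then verify that $s_m$ is a right descent of $\tau$ (hence $\beta\lessdot\tau$) by a short case analysis on how $\{a,b\}$ meets $\{m,m+1\}$, using in each overlapping case the covering (no-intermediate-value) condition for $\tau\precdot\alpha$ to rule out the bad order. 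This yields $\ell(\beta)=\ell(\sigma)-1$, and since $\beta,\sigma$ differ by a transposition with $\ell(\sigma)=\ell(\beta)+1$, the standard fact that a length-raising transposition is a Bruhat cover gives $\beta\precdot\sigma$. Membership $\beta\in[e,\pi]_R$ is immediate, since $\beta\lessdot\tau\leq\pi$.

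The reverse direction is built symmetrically: given $\beta=\tau s_m$ (a weak descent, $\tau_m>\tau_{m+1}$) with $\beta\precdot\sigma$, I set $\alpha\coloneqq\sigma s_m$; the same conjugation identity shows $\alpha$ and $\tau$ differ by a transposition, and an analogous case analysis shows $s_m$ is a right ascent of $\sigma$, giving $\sigma\lessdot\alpha$ and then $\tau\precdot\alpha$. The one point that is \emph{not} automatic — and which I expect to be the main obstacle — is the membership $\alpha\in[e,\pi]_R$, since $\alpha$ lies \emph{above} $\sigma,\tau$ rather than below. By Proposition~\ref{prop:weak-characterized-by-inversions} this reduces to showing that the unique inversion of $\alpha$ not already in $\inv(\sigma)$, namely the value pair $\{\sigma_m,\sigma_{m+1}\}$, lies in $\inv(\pi)$. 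I would prove this along the same overlap cases: in each case $\{\sigma_m,\sigma_{m+1}\}$ is either directly an inversion of $\beta$ or of $\tau$, or is forced into $\inv(\pi)$ by transitivity of positions in $\pi$ from two inversions supplied by $\sigma\leq\pi$ and $\tau\leq\pi$ (an argument in the spirit of Lemma~\ref{lem:forbidden-label}). Since $\beta,\sigma,\tau\leq\pi$, the pair then lies in $\inv(\pi)$, so $\alpha\leq\pi$.

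Finally, for uniqueness, I observe that both $\alpha$ and $\beta$ are determined by the connecting reflection $s_m$, so it suffices to show there is at most one admissible index $m$. An admissible $m$ is simultaneously a right ascent of $\sigma$ and a right descent of $\tau$, and satisfies $\tau^{-1}\sigma=s_m t_{cd}$ (with $t_{cd}$ the cover transposition $\beta\precdot\sigma$) subject to the covering condition. If two admissible indices $m\neq m'$ existed, equating the two expressions for $\tau^{-1}\sigma$ gives $s_{m'}s_m=t_{c'd'}t_{cd}$. When $|m-m'|\geq 2$ this identity forces $\{t_{cd},t_{c'd'}\}=\{s_m,s_{m'}\}$, which makes one of $s_m,s_{m'}$ a right ascent rather than a descent of $\tau$ (or collapses $\sigma=\tau$); when $|m-m'|=1$ the product is a $3$-cycle and the covering condition pins down the relative order of $\tau_m,\tau_{m+1},\tau_{m+2}$ in two incompatible ways. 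Either way we reach a contradiction, so $m$, and hence each of $\alpha$ and $\beta$, is unique.
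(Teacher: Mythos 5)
Your proposal is correct, but it is organized differently from the paper's own proof. The paper treats the existence and uniqueness of the diamond completion as known: both directions, and uniqueness of $\alpha$ and $\beta$, are obtained by citing Lemma 2.2 of \cite{sperner-paper}, which handles the ambient group $S_n$ with no interval constraint; the only content the paper actually proves is the one point you correctly singled out as the crux, namely that in the direction starting from $\beta$ the element $\alpha=\sigma s_m$ lies in $[e,\pi]_R$. On that point your argument coincides with the paper's: by Proposition~\ref{prop:weak-characterized-by-inversions} it suffices to place the single new inversion $\{\sigma_m,\sigma_{m+1}\}$ of $\alpha$ into $\inv(\pi)$, and the paper's four overlap cases realize exactly the mechanisms you describe (the pair is an inversion of $\tau$ when $\{i,j\}$ is disjoint from $\{m,m+1\}$, an inversion of $\beta$ when $i=m$ or $j=m+1$, and follows by transitivity from one inversion of $\sigma$ and one of $\tau$ when $j=m$ or $i=m+1$). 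Where you differ is that you re-derive the cited lemma from scratch: the conjugation identity $\tau s_m=\sigma\, s_m t_{ab}s_m$, the descent/ascent case analyses using the no-intermediate-value condition, and an explicit uniqueness argument showing the connecting index $m$ is unique (via $s_{m'}s_m=t_{c'd'}t_{cd}$ and the two cases $|m-m'|\geq 2$, $|m-m'|=1$). I checked these sketched case analyses and they all go through, including the uniqueness dichotomy. The trade-off is clear: the paper's proof is shorter and leans on prior work, while yours is self-contained and makes visible why the same simple reflection $s_m$ governs both sides of the diamond -- at the cost of reproving, in about a page, what \cite{sperner-paper} already provides.
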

\begin{proof}
If $\tau \precdot \alpha \gtrdot \sigma$, the existence and uniqueness of $\beta$ follows from Lemma 2.2 of \cite{sperner-paper}.  So suppose that $\tau \gtrdot \beta=\tau s_m \precdot \beta t_{ij} = \sigma$. Lemma 2.2 of \cite{sperner-paper} again implies that $\alpha= \sigma s_m$ satisfies $\tau \precdot \alpha \gtrdot \sigma$ and is the unique permutation with this property; it remains to verify that $\alpha \in [e,\pi]_R$.

We will show that $\inv(\alpha) \subseteq \inv(\pi).$ Since $\alpha$ covers $\sigma$ in the weak order, $\inv(\alpha)=\inv(\sigma) \cup\{(\alpha_m,\alpha_{m+1}) \},$ and we know $\inv(\sigma) \subseteq \inv(\pi)$ so we only need to check that $(\alpha_m,\alpha_{m+1}) \in \inv(\pi).$ We consider the possible overlaps of $\{i,j\}$ with $\{m,m+1\}$ (the two sets cannot be equal since $\tau \ne \sigma$):
\begin{itemize}
    \item If $\{i,j\}$ is disjoint from $\{m,m+1\}$ then $\tau$ inverts $\alpha_m$ and $\alpha_{m+1}$ so $\pi > \tau$ must as well.
    \item If $i=m$ or $j=m+1$ then $\beta$ inverts $\alpha_m$ and $\alpha_{m+1}$, so $\pi>\beta$ must as well.
    \item If $j=m$ then in positions $i < m<m+1$, $\sigma$ has the values $(\alpha_i, \alpha_{m+1}, \alpha_m).$ The values of $\beta$ and $\tau$ are also determined, $(\alpha_{m+1},\alpha_i, \alpha_{m})$ and $(\alpha_{m+1},\alpha_m, \alpha_i)$, respectively. None of the other permutations invert $(\alpha_m,\alpha_{m+1}).$ However, it must be that $\alpha_{m+1}<\alpha_i <\alpha_m$, and $\tau$ inverts $(\alpha_m,\alpha_i)$ while $\sigma$ inverts $(\alpha_i,\alpha_{m+1}).$ So, $\pi$ must invert both of those pairs, hence by transitivity, $\pi$ inverts $(\alpha_m,\alpha_{m+1}).$
    \item The case $i=m+1$ is similar to the previous case. 
\end{itemize}    
\end{proof}
Using the same definitions of $\alpha = \sigma s_m$ and $\beta = \sigma t_{ij}$ as above, we note that the down weights for edges $\tau \to \beta = \tau s_m$ and $\alpha \to \sigma = \alpha s_m$ are both $m$. To verify that $(E F)_{\sigma,\tau} = (FE )_{\sigma,\tau}$, we need to check that the up weights for edges $\beta \to \sigma$ and $\tau \to \alpha$ agree. We consider the same five cases of possible overlaps of $\{m,m+1\}$ and $\{i,j\}$ as above:

\textbf{Case 1: $\{m,m+1\}$ is disjoint from $\{i,j\}.$ } Since disjoint transpositions commute, $\alpha= \tau s_mt_{ij}s_m=\tau t_{ij}$. 
We have that \begin{align*}\wt^\pi(\tau,\alpha)=1+&|\{k > j ~ \vert ~ \min(\tau_i,\tau_j) < \tau_k < \max(\tau_i,\tau_j)\}|\\+ &|\{k > j ~ \vert ~ \min{(\pi^{-1}\tau_i,\pi^{-1}\tau_j)} < \pi^{-1}\tau_k < \max(\pi^{-1}\tau_i,\pi^{-1}\tau_j) \}| \\=1+&|\{\tau_k~\vert~k>j,~ \tau_i < \tau_k < \tau_j\}| \\ +&|\{ \tau_k ~\vert~ k > j,~ \pi^{-1} \tau_j < \pi^{-1} \tau_k < \pi^{-1} \tau_i\}|.\end{align*}
Also, \begin{align*}\wt^\pi(\beta,\sigma)=1+&|\{k > j ~ \vert ~ \min(\beta_i,\beta_j) < \beta_k < \max(\beta_i,\beta_j)\}|\\+ &|\{k > j ~ \vert ~ \min(\pi^{-1}\beta_i,\pi^{-1}\beta_j) < \pi^{-1}\beta_k < \max(\pi^{-1}\beta_i,\pi^{-1}\beta_j) \}| \\= 1 + &|\{\beta_k ~\vert~ k>j,~ \beta_i < \beta_k < \beta_j\}| \\ + &|\{\beta_k ~\vert~k>j, ~\pi^{-1}\beta_j < \pi^{-1}\beta_k < \pi^{-1}\beta_i\}|.\end{align*}
For all $k \not\in \{m,m+1\},$ $\tau_k =\beta_k$. In particular, $\tau_i = \beta_i$ and $\tau_j = \beta_j$. Since $m$ and $m+1$ are adjacent and unequal to $i,j$, $\{\tau_m,\tau_{m+1}\} = \{\beta_m,\beta_{m+1}\}$ and either both $m,m+1 > j$ or neither is. So, the second expressions for $\wt^\pi(\tau,\alpha)$ and $\wt^\pi(\beta,\sigma)$ count the same sets, so they are equal. 

\textbf{Case 2: $i = m$.} In this case, $t_{ij}=(m \: j)$, $s_m = (m \: m+1)$, and $s_m t_{ij} s_m = (m+1 \: j).$ Thus \begin{align*}\wt^\pi(\tau,\alpha) = 1 + &|\{\tau_k ~\vert~ k>j,~ \tau_{m+1} < \tau_k < \tau_j\}| \\+ &|\{\tau_k ~\vert~ k>j,~ \pi^{-1}\tau_j < \pi^{-1}\tau_k < \pi^{-1}\tau_{m+1} \}|\\
\wt^\pi(\beta,\sigma) = 1 + &|\{\beta_k ~|~ k > j,~ \beta_m < \beta_k < \beta_j\}| \\ + &|\{\beta_k ~\vert~ k>j,~ \pi^{-1}\beta_j < \pi^{-1} \beta_k < \pi^{-1} \beta_m \}|.\end{align*}

The permutations agree except on the set $\{m,m+1,j\}$. We have $\tau_{m+1} = \beta_m = \alpha_j$ and $\tau_j = \beta_j = \alpha_{m+1}$. Since for all $k>j, \beta_k = \tau_k,$ the sets in the expressions for $\wt^\pi(\tau,\alpha)$ and $\wt^\pi(\beta,\sigma)$ are identical.

\textbf{Case 3: $i = m + 1$.} In this case, $t_{ij}=(m+1 \: j)$, $s_m=(m \: m+1)$, and $s_mt_{ij}s_m=(m \: j).$ Thus \begin{align*}\wt^\pi(\tau,\alpha) = 1 + &|\{\tau_k ~\vert~ k>j,~ \tau_m < \tau_k < \tau_j\}| \\+ &|\{\tau_k ~\vert~ k>j,~ \pi^{-1}\tau_j < \pi^{-1}\tau_k < \pi^{-1}\tau_m \}|\\
\wt^\pi(\beta,\sigma) = 1 + &|\{\beta_k ~|~ k > j,~ \beta_{m+1} < \beta_k < \beta_j\}| \\ + &|\{\beta_k ~\vert~ k>j,~ \pi^{-1}\beta_j < \pi^{-1} \beta_k < \pi^{-1} \beta_{m+1} \}|.\end{align*}

Again, the numbers moved by the transpositions $t_{ij}$ acting on $\beta$ and $s_m t_{ij} s_m$ acting on $\tau$ are the same: $\tau_m = \beta_{m+1} = \alpha_j$ and $\tau_j= \beta_j =\alpha_m$. Since for all $k>j$, $\tau_k= \beta_k,$ the sets counted in $\wt^\pi(\tau,\alpha)$ and $\wt^\pi(\beta,\sigma)$ are identical.

\textbf{Case 4: $j = m$.} In this case, $t_{ij}=(i \: m)$, $s_m=(m \: m+1),s_mt_{ij}s_m=(i \: m+1)$.
Thus \begin{align*}\wt^\pi(\tau,\alpha) = 1 + &|\{\tau_k ~\vert~ k>m+1,~ \tau_i < \tau_k < \tau_{m+1}\}| \\+ &|\{\tau_k ~\vert~ k>m+1,~ \pi^{-1}\tau_{m+1} < \pi^{-1}\tau_k < \pi^{-1}\tau_i \}|\\
\wt^\pi(\beta,\sigma) = 1 + &|\{\beta_k ~|~ k > m,~ \beta_i < \beta_k < \beta_m\}| \\ + &|\{\beta_k ~\vert~ k>m,~ \pi^{-1}\beta_m < \pi^{-1} \beta_k < \pi^{-1} \beta_i \}|.\end{align*}

While $\tau_i = \beta_i = \alpha_{m+1}$ and $\tau_{m+1}=\beta_m = \alpha_i,$ the conditions on $k$ are different. In $\wt^\pi(\tau,\alpha),$ we count where $k>m+1$ but in $\wt^\pi(\beta,\sigma),$ we count where $k>m.$ To show that these are equal, we have to show that $\beta_{m+1} \not \in (\beta_i,\beta_m)$ and $\pi^{-1} \beta_{m+1} \not \in (\pi^{-1}\beta_m,\pi^{-1}\beta_i).$ Other than that possibility, the sets are identical.

The permutations $\tau, \alpha, \sigma, \beta$ agree everywhere except for $\{i,m,m+1\}$. In those positions, $\tau$ has the values $(\alpha_{m+1},\alpha_m,\alpha_i)$, $\beta$ has $(\alpha_{m+1}, \alpha_i, \alpha_m)$, and $\sigma$ has the values $(\alpha_i, \alpha_{m+1}, \alpha_m).$ Since $\tau > \beta$ in the weak order, $\alpha_m>\alpha_i=\beta_m$, thus $\beta_{m+1}=\alpha_m$ does not contribute to the first set. Now, $\alpha_m$ is inverted with $\alpha_i$ in $\tau$ so since $\pi>\tau$, $\pi$ inverts $\beta_{m+1}=\alpha_m$ and $\beta_{m}=\alpha_i$, so $\pi^{-1}\beta_{m+1} = \pi^{-1}\alpha_m$. Thus the sets defining $\wt^\pi(\tau,\alpha)$ and $\wt^\pi(\beta,\sigma)$ are identical.

\textbf{Case 5: $j = m + 1$.} In this case, $t_{ij}=(i \: m+1)$, $s_m=(m \: m+1)$,$s_mt_{ij}s_m=(i \: m)$.
Thus \begin{align*}\wt^\pi(\tau,\alpha) = 1 + &|\{\tau_k ~\vert~ k>m,~ \tau_i < \tau_k < \tau_m\}| \\+ &|\{\tau_k ~\vert~ k>m,~ \pi^{-1}\tau_{m} < \pi^{-1}\tau_k < \pi^{-1}\tau_i \}|\\
\wt^\pi(\beta,\sigma) = 1 + &|\{\beta_k ~|~ k > m+1,~ \beta_i < \beta_k < \beta_{m+1}\}| \\ + &|\{\beta_k ~\vert~ k>m+1,~ \pi^{-1}\beta_{m+1} < \pi^{-1} \beta_k < \pi^{-1} \beta_i \}|.\end{align*}

In this case $\tau_i = \beta_i = \alpha_m$ and $\tau_m = \beta_{m+1} = \alpha_i$. Like the previous case, the sets of possible $k$ values differ by one, so we must verify that $\tau_{m+1}$ is not in $(\tau_i,\tau_m)$ nor is $\pi^{-1}\tau_{m+1}$ in $(\pi^{-1}\tau_m,\pi^{-1}\tau_i).$ Other than the possible inclusion of $\tau_{m+1}$ and $\pi^{-1}\tau_{m+1},$ the sets are identical.

The permutations agree except at the locations $i$, $m$, and $m+1$. In $\tau$, the values at those locations are $(\alpha_m, \alpha_i, \alpha_{m+1}).$ In $\beta,$ $(\alpha_m, \alpha_{m+1}, \alpha_i).$ In $\sigma,$ $(\alpha_i, \alpha_{m+1}, \alpha_m).$ Since $\sigma < \alpha$ in the weak order, $\tau_{m+1} = \alpha_{m+1} < \alpha_m$. So, $\tau_{m+1}$ is not in the interval $(\tau_i, \tau_m) = (\alpha_m,\alpha_i)$. To see that $\pi^{-1}\tau_{m+1}$ is to the right of $(\pi^{-1}\tau_m,\pi^{-1}\tau_i)$, it is already inverted with $\alpha_i$ in $\tau$. So, $\pi > \tau$ implies that $\pi^{-1}\tau_m < \pi^{-1}\tau_{m+1}$, and $\tau_{m+1}$ is not included in the sets determining $\wt^\pi(\tau,\alpha)$.
\end{proof}

\section{Strong order Macdonald identities for Schubert polynomials} \label{sec:schubert}

In this section we generalize techniques from \cite{duality-paper, Hamaker} to prove Theorem~\ref{thm:schubert}.

\subsection{Macdonald's reduced word identity}
Proposition~\ref{prop:applying-nabla} below indicates a connection between Schubert polynomials and the weak order weights appearing in Section~\ref{sec:sl2}.  Let $\nabla=\sum_{i=1}^n \partial/\partial x_i$; the following proposition was the key observation of \cite{Hamaker}:

\begin{prop}[Hamaker, Pechenik, Speyer, and Weigandt \cite{Hamaker}]
\label{prop:applying-nabla}
Let $\sigma \in S_n$, then
\[
\nabla \S_\sigma = \sum_{\sigma s_i \lessdot \sigma} i \: \S_{\sigma s_i}.
\]
\end{prop}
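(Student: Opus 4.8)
The plan is to prove the identity by downward induction on $\ell(\sigma)$, starting from $\sigma=w_0$ and descending via the divided-difference recursion $\S_{s_j\sigma}=N_j\S_\sigma$. The engine of the induction is the observation that $\nabla$ commutes with every Newton divided difference operator. Writing $\partial_k=\partial/\partial x_k$ for the ordinary partial derivative (so that $\nabla=\sum_k\partial_k$), I would first record that $\nabla$ is invariant under the $S_n$-action, i.e.\ $\nabla s_i=s_i\nabla$, since it is the sum of all the $\partial_k$. Then for any polynomial $f$, writing $g=f-s_if$ (which is divisible by $x_i-x_{i+1}$, so that $N_if=g/(x_i-x_{i+1})$), the quotient rule shows that the only ``boundary'' terms are those produced when $\partial_i$ and $\partial_{i+1}$ differentiate the denominator, namely $-g/(x_i-x_{i+1})^2$ and $+g/(x_i-x_{i+1})^2$, which cancel. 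What remains is $\nabla N_if=(\nabla g)/(x_i-x_{i+1})=(\nabla f-s_i\nabla f)/(x_i-x_{i+1})=N_i(\nabla f)$, using $S_n$-invariance of $\nabla$ in the middle step. Hence $[\nabla,N_i]=0$ for all $i$.

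For the base case $\sigma=w_0$ I would compute both sides explicitly. On one hand $\nabla\S_{w_0}=\nabla x^{\rho_n}=\sum_{j=1}^{n-1}(n-j)\,x^{\rho_n}/x_j$. On the other hand, each $w_0s_i$ is easily seen to avoid $132$, so by Proposition~\ref{prop:schubert-is-code} the polynomial $\S_{w_0s_i}$ is a single monomial; identifying it via $(w_0s_i)^{-1}=s_iw_0=w_0s_{n-i}$ and the one-step computation $N_{n-i}x^{\rho_n}=x^{\rho_n}/x_{n-i}$ gives $\S_{w_0s_i}=x^{\rho_n}/x_{n-i}$. Thus $\sum_{i=1}^{n-1}i\,\S_{w_0s_i}=\sum_{j=1}^{n-1}(n-j)\,x^{\rho_n}/x_j$ after the substitution $j=n-i$, matching $\nabla\S_{w_0}$.

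For the inductive step, suppose the identity holds for all permutations of length greater than $\ell(\sigma)$, with $\sigma\ne w_0$, and choose a left ascent $j$ of $\sigma$, so that $\tau\coloneqq s_j\sigma$ has $\ell(\tau)=\ell(\sigma)+1$ and $\S_\sigma=N_j\S_\tau$. Applying the commutation relation and the inductive hypothesis,
\[
\nabla\S_\sigma=N_j\nabla\S_\tau=N_j\sum_{\tau s_i\lessdot\tau}i\,\S_{\tau s_i}=\sum_{\tau s_i\lessdot\tau}i\,N_j\S_{\tau s_i}.
\]
Here I use the standard fact that $N_j\S_\rho=\S_{s_j\rho}$ when $j$ is a left descent of $\rho$ and $N_j\S_\rho=0$ otherwise (the latter following from the given recursion together with $N_j^2=0$), as well as the identity $s_j\tau s_i=\sigma s_i$. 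The displayed sum therefore becomes $\sum i\,\S_{\sigma s_i}$, summed over those $i$ which are simultaneously right descents of $\tau$ and for which $j$ is a left descent of $\tau s_i$. It then remains to prove the combinatorial lemma that this index set coincides exactly with the set of right descents of $\sigma$; granting this, the sum is precisely $\sum_{\sigma s_i\lessdot\sigma}i\,\S_{\sigma s_i}$, closing the induction (and since no two distinct $i$ give the same $\sigma s_i$, and distinct Schubert polynomials are linearly independent, there are no collisions to worry about).

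The main obstacle is this last descent lemma. I would prove it in one-line notation: writing $p=\sigma^{-1}(j)<q=\sigma^{-1}(j+1)$ (the inequality holding because $j$ is a left ascent of $\sigma$), the permutation $\tau$ is obtained from $\sigma$ by interchanging the values $j$ and $j+1$ at positions $p$ and $q$, and $\tau s_i$ further swaps positions $i,i+1$. The proof is a finite case analysis according to how $\{i,i+1\}$ meets $\{p,q\}$, checking in each case that $\sigma_i>\sigma_{i+1}$ holds if and only if both $\tau_i>\tau_{i+1}$ and the value $j+1$ precedes $j$ in $\tau s_i$. The cases where $\{i,i+1\}$ is disjoint from $\{p,q\}$ are immediate, since the relevant entries agree in $\sigma$ and $\tau$; the overlapping cases, which require tracking the relative order of $j$, $j+1$, and the neighboring entry, constitute the genuine content of the argument and are where I expect the real work to lie.
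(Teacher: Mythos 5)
The paper does not actually prove this proposition: it is imported from Hamaker--Pechenik--Speyer--Weigandt \cite{Hamaker}, so there is no internal argument to compare yours against. Judged on its own, your proposal is a correct, self-contained proof, and it follows essentially the same divided-difference induction as the cited source: the commutation $[\nabla,N_i]=0$ (your computation is right --- the two terms produced when $\partial_i$ and $\partial_{i+1}$ hit $1/(x_i-x_{i+1})$ cancel, and $\nabla$ commutes with the $S_n$-action), the explicit base case $\nabla x^{\rho_n}=\sum_{j}(n-j)x^{\rho_n}/x_j=\sum_i i\,\S_{w_0s_i}$, and downward induction via $\S_\sigma=N_j\S_\tau$ together with the fact that $N_j\S_\rho$ equals $\S_{s_j\rho}$ or $0$ according to whether $j$ is a left descent of $\rho$. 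The one step you defer --- the descent lemma --- is true, and it needs less case work than you anticipate. One inclusion is pure length bookkeeping: if $i$ is a right descent of $\tau$ and $j$ is a left descent of $\tau s_i$, then $\ell(\sigma s_i)=\ell(s_j\tau s_i)=\ell(\tau s_i)-1=\ell(\tau)-2=\ell(\sigma)-1$, so $i$ is a right descent of $\sigma$. For the converse, suppose $i$ is a right descent of $\sigma$; since $j$ is a left ascent of $\sigma$, the pair of values $\{\sigma_i,\sigma_{i+1}\}$ cannot equal $\{j,j+1\}$ (either assignment would contradict one of the two hypotheses), and since swapping the adjacent positions $i,i+1$ reverses the relative order of exactly the pair of values occupying those positions, the value $j$ still precedes $j+1$ in $\sigma s_i$. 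Hence $\ell(\tau s_i)=\ell(s_j\sigma s_i)=\ell(\sigma s_i)+1=\ell(\sigma)=\ell(\tau)-1$, so $i$ is a right descent of $\tau$ and $j$ is a left descent of $\tau s_i=s_j(\sigma s_i)$, which is precisely membership in your index set. Two cosmetic points: once the index sets are matched the two sums agree term by term, so your appeal to linear independence of Schubert polynomials is unnecessary; and in the base case the detour through $132$-avoidance and Proposition~\ref{prop:schubert-is-code} can be dropped, since the direct computation $\S_{w_0s_i}=N_{n-i}x^{\rho_n}=x^{\rho_n}/x_{n-i}$ already identifies the monomial.
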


Hamaker, Pechenik, Speyer, and Weigandt \cite{Hamaker} used this result to give a simple new proof of Macdonald's celebrated reduced word formula for the principal specialization of Schubert polynomials; we adapt this proof technique in the proof of Theorem~\ref{thm:schubert} in Section~\ref{sec:padded}.

\begin{theorem}[Macdonald \cite{Macdonald}]
\label{thm:macdonald-identity}
Let $\sigma \in S_n$. Then
\[
\S_\sigma(1,\ldots,1)=\frac{1}{\ell(\sigma)!}\sum_{s_{i_1} \cdots s_{i_{\ell(\sigma)}}=\sigma} i_1 \cdots i_{\ell(\sigma)}.
\]
\end{theorem}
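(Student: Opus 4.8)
The plan is to iterate the operator $\nabla = \sum_{i=1}^n \partial/\partial x_i$ starting from $\S_\sigma$, applying Proposition~\ref{prop:applying-nabla} at each step. Since $\nabla$ lowers the degree of a homogeneous polynomial by one and $\S_\sigma$ is homogeneous of degree $\ell(\sigma)$, after $\ell(\sigma)$ applications we reach a constant. First I would show, by repeatedly invoking Proposition~\ref{prop:applying-nabla}, that
\[
\nabla^{\ell(\sigma)} \S_\sigma = \left(\sum_{C} i_1 \cdots i_{\ell(\sigma)}\right)\S_e,
\]
where the sum ranges over all maximal chains $C:\sigma = \sigma^{(0)} \gtrdot \sigma^{(1)} \gtrdot \cdots \gtrdot \sigma^{(\ell(\sigma))} = e$ in the weak order below $\sigma$, with $\sigma^{(k)} = \sigma^{(k-1)} s_{i_k}$ recording the label $i_k$ collected at the $k$-th application of $\nabla$. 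Since $\S_e = 1$, the right-hand side is a scalar.

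Next I would identify these chains with reduced words. Reading a chain from top to bottom gives a sequence $(i_1,\ldots,i_{\ell(\sigma)})$ with $\sigma s_{i_1}\cdots s_{i_{\ell(\sigma)}} = e$ and strictly decreasing lengths, so that $\sigma = s_{i_{\ell(\sigma)}}\cdots s_{i_1}$ is a reduced word; conversely every reduced word arises from exactly one chain. Because the product $i_1\cdots i_{\ell(\sigma)}$ is symmetric in its factors, reindexing the sum over chains as a sum over reduced words $s_{j_1}\cdots s_{j_{\ell(\sigma)}}=\sigma$ yields
\[
\nabla^{\ell(\sigma)} \S_\sigma = \sum_{s_{j_1} \cdots s_{j_{\ell(\sigma)}} = \sigma} j_1 \cdots j_{\ell(\sigma)}.
\]

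It remains to evaluate the left-hand side directly, and here I would use the elementary identity $\nabla^d f = d!\,f(1,\ldots,1)$ valid for any homogeneous polynomial $f$ of degree $d$. This follows by expanding $f(x + t\mathbf{1})$ as a finite Taylor series in the all-ones direction, setting $x=0$, and matching the coefficient of $t^d$: by homogeneity $f(t\mathbf{1}) = t^d f(1,\ldots,1)$, while each intermediate derivative $\nabla^k f$ with $k<d$ is homogeneous of positive degree and hence vanishes at the origin, leaving only the degree-$d$ term $\tfrac{t^d}{d!}\nabla^d f$. Applying this with $f=\S_\sigma$ and $d=\ell(\sigma)$ gives $\nabla^{\ell(\sigma)}\S_\sigma = \ell(\sigma)!\,\S_\sigma(1,\ldots,1)$; comparing with the chain sum and dividing by $\ell(\sigma)!$ yields Macdonald's formula. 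The computation is almost entirely formal once Proposition~\ref{prop:applying-nabla} is available; the only points requiring care are the bijection between maximal weak-order chains below $\sigma$ and reduced words for $\sigma$ (where one must track the ordering convention so the labels collected by $\nabla$ match the entries of the reduced word), and the Taylor-expansion identity for $\nabla^d$, which is the single genuinely analytic ingredient and is elementary.
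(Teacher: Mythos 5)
Your proof is correct, and it is essentially the argument the paper has in mind: Theorem~\ref{thm:macdonald-identity} is cited from Macdonald, and the paper explicitly points to the Hamaker--Pechenik--Speyer--Weigandt proof via Proposition~\ref{prop:applying-nabla}, which is exactly your iteration of $\nabla$ combined with the identity $\nabla^d f = d!\,f(1,\ldots,1)$ for homogeneous $f$ of degree $d$. The same two-sided evaluation of a power of a lowering operator (chain sum versus factorial times principal specialization) is what the paper adapts, with $\Delta$ in place of $\nabla$, in its proof of Theorem~\ref{thm:schubert}.
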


\subsection{$\pi$-padded Schubert polynomials and strong order Macdonald identities}
\label{sec:padded}

Given a $132$-avoiding permutation $\pi \in S_n$, Proposition~\ref{prop:schubert-is-code} says that $\S_\sigma=x^{\beta}$ for some composition $\beta=(\beta_1,\ldots, \beta_n)$ with $\sum_i \beta_i = \ell(\sigma)$ (in fact, this composition is given by the \emph{inversion table} of $\pi$, but we will not need this fact).  We introduce a new set of variables $y_1,\ldots,y_n$ and define the \emph{$\pi$-padded Schubert polynomials} $\S_\sigma^{\pi}$ for $\sigma \leq \pi$ as the images of the usual Schubert polynomials $\S_\sigma$ under the map sending $x^{\alpha} \mapsto x^{\alpha}y^{\beta-\alpha}$ for each monomial $x^{\alpha}$.  That is, we add homogenizing variables $y_i$ to $\S_\sigma$ so that it has degree $\beta_i$ in the variables $x_i,y_i$ for each $i=1,\ldots,n$. Note that, by Proposition~\ref{prop:applying-nabla}, any monomial $x^{\alpha}$ appearing in $\S_\sigma$ has $\alpha \leq \beta$, so $\S_\sigma^{\pi}$ is indeed a polynomial.  In the case $\pi=w_0$, this construction recovers the padded Schubert polynomials of \cite{duality-paper}.  We write
\[
U_n^{\pi}=\spn_{\C}\{\S_\sigma^{\pi} \: | \: \sigma \leq \pi\}.
\]

As noticed in \cite{duality-paper}, it now makes sense to define another differential operator
\[
\Delta = \sum_{i=1}^n x_i \frac{\partial}{\partial y_i},
\]
and to adapt $\nabla$ so that it acts on $U_n^{\pi}$:
\[
\nabla = \sum_{i=1}^n y_i \frac{\partial}{\partial x_i}.
\]
It is immediate that the statement of Proposition~\ref{prop:applying-nabla} holds with this modified $\nabla$ and $\pi$-padded Schubert polynomials.  Interestingly, $\Delta$ applied to a $\pi$-padded Schubert polynomial $\S_\sigma^{\pi}$ may be expanded as a sum over the \emph{strong order} covers of $\sigma$, rather than the weak order appearing in Proposition~\ref{prop:applying-nabla}.

\begin{prop}
\label{prop:applying-delta-to-padded}
For $\sigma, \pi \in S_n$ with $\sigma \leq \pi$ and $\pi$ $132$-avoiding, we have
\[
\Delta \S_\sigma^{\pi} = \sum_{\substack{\sigma t_{ij} \in [e,\pi]_R \\ \sigma \precdot \sigma t_{ij}}} \wt^{\pi}(\sigma,\sigma t_{ij}) \S^{\pi}_{\sigma t_{ij}},
\]
where $\wt^{\pi}(\sigma,\sigma t_{ij})$ is the weight function appearing in Section~\ref{sec:sl2}.
\end{prop}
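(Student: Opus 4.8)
The plan is to compute the action of $\Delta$ directly on the single monomial $\S_\sigma^\pi$ and match the result, term by term, against the strong order cover sum on the right-hand side. By Proposition~\ref{prop:schubert-is-code}, since $\pi$ is $132$-avoiding, each $\S_\tau$ for $\tau \leq \pi$ is a single monomial $x^{\gamma(\tau)}$; the $\pi$-padding then gives $\S_\tau^\pi = x^{\gamma(\tau)} y^{\beta - \gamma(\tau)}$, where $\beta = \gamma(\pi)$ records the exponents of the top Schubert monomial $\S_\pi$. The operator $\Delta = \sum_i x_i \partial/\partial y_i$ lowers the $y$-degree in variable $i$ by one and raises the $x$-degree by one, so applied to $\S_\sigma^\pi$ it produces $\sum_i (\beta_i - \gamma(\sigma)_i)\, x^{\gamma(\sigma)+\mathbf{e}_i} y^{\beta - \gamma(\sigma) - \mathbf{e}_i}$. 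Thus the entire proposition reduces to a purely combinatorial identity about inversion tables: I must show that each monomial $x^{\gamma(\sigma)+\mathbf{e}_i} y^{\cdots}$ occurring with a nonzero coefficient is exactly a padded Schubert monomial $\S_{\sigma t_{ij}}^\pi$ for a strong order cover $\sigma \precdot \sigma t_{ij}$, and that the multiplicity $\beta_i - \gamma(\sigma)_i$ splits correctly across the various covers contributing exponent vectors that agree in coordinate $i$.

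First I would pin down the combinatorial meaning of the exponent vectors. Writing $c(\tau)$ for the code (Lehmer-type inversion table) of $\tau$, the single-monomial fact identifies $\gamma(\tau)$ with $c(\tau)$; for a $132$-avoiding $\tau$ the entries of $c(\tau)$ are weakly decreasing in a controlled way, and a strong order cover $\tau \precdot \tau t_{ij}$ corresponds to incrementing the code in a prescribed position. The central computation is then to understand, for fixed $\sigma$, how the codes $c(\sigma t_{ij})$ of its strong order covers relate to $c(\sigma) + \mathbf{e}_i$. I expect that each such cover increments $c(\sigma)$ in exactly one coordinate, call it $p = p(i,j)$, and that several distinct covers may share the same incremented coordinate. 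Grouping the covers by their incremented coordinate $p$, the required identity becomes: for each coordinate $p$,
\[
\beta_p - c(\sigma)_p = \sum_{\substack{\sigma \precdot \sigma t_{ij} \\ p(i,j)=p}} \wt^\pi(\sigma,\sigma t_{ij}).
\]

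The heart of the argument, and the step I expect to be the main obstacle, is verifying this last weighted count: that the total weight $\wt^\pi$ of all strong order covers of $\sigma$ whose code increment lands in a fixed coordinate $p$ equals the simple difference $\beta_p - c(\sigma)_p$. Here the definition~(\ref{eq:def-of-wt}) of $\wt^\pi$, with its two correction terms counting values $\sigma_k$ squeezed between $\sigma_i,\sigma_j$ (and their images under $\pi^{-1}$), should be read as recording exactly the ``gap'' that the transposition $t_{ij}$ closes; summing these weights over all covers contributing to coordinate $p$ telescopes to the full available room $\beta_p - c(\sigma)_p$ in that coordinate. I would prove this by carefully tracking, for each value $v$ with $c(\sigma)_p < v \leq \beta_p$, that $v$ is counted by $\wt^\pi$ in precisely one cover, using the $132$-avoidance of $\pi$ (via Proposition~\ref{prop:weak-characterized-by-inversions}) to rule out the configurations that would otherwise cause double-counting or omission — exactly the kind of pattern argument already used in Lemma~\ref{lem:forbidden-label}. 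Once this bijective accounting between contributing values and the weight terms is established, summing over all coordinates $p$ recovers $\Delta \S_\sigma^\pi$ in the claimed form, completing the proof.
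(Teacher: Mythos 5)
Your proposal fails at its very first step: you assert that, because $\pi$ is $132$-avoiding, every $\S_\tau$ with $\tau \leq \pi$ is a single monomial. Proposition~\ref{prop:schubert-is-code} gives this only for permutations that are themselves $132$-avoiding, and elements of $[e,\pi]_R$ need not avoid $132$: take $\pi = w_0 = 321$ in $S_3$ (which is $132$-avoiding, and for which $[e,\pi]_R$ is all of $S_3$) and $\sigma = 132 \leq \pi$, where $\S_{132} = x_1 + x_2$. Consequently $\S_\sigma^{\pi}$ and the polynomials $\S_{\sigma t_{ij}}^{\pi}$ on the right-hand side are in general sums of many monomials, $\Delta \S_\sigma^{\pi}$ is not the expression $\sum_i (\beta_i - \gamma(\sigma)_i)\, x^{\gamma(\sigma)+\mathbf{e}_i} y^{\beta-\gamma(\sigma)-\mathbf{e}_i}$ you wrote, and the entire reduction to an identity about inversion tables collapses. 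A secondary error: even for a $132$-avoiding (single-monomial) $\sigma$, a strong cover $\sigma \precdot \sigma t_{ij}$ does \emph{not} increment the code in exactly one coordinate; it increases coordinate $i$ by $1+a$ and decreases coordinate $j$ by $a$, where $a = |\{k>j \: : \: \sigma_i < \sigma_k < \sigma_j\}|$. Finally, the step you yourself flag as the heart of the argument (the telescoping or bijective accounting of the weights $\wt^{\pi}$) is only a plausibility claim, so even setting aside the false premises the argument is incomplete.

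The paper's proof avoids all monomial-level computation precisely because such a computation is intractable here. It observes that $\Delta$, $\nabla$, and a grading operator $\tilde{H}$ define an $\mf{sl}_2$-action on the span $V$ of all monomials $x^{\alpha}y^{\beta-\alpha}$ with $\alpha \leq \beta$; that $U_n^{\pi}$ is closed under $\nabla$ and $\tilde{H}$ by Proposition~\ref{prop:applying-nabla}, and is weight symmetric by the rank symmetry of $[e,\pi]_R$; and hence, by Proposition~\ref{prop:b-submodule}, that $U_n^{\pi}$ is closed under $\Delta$ as well. Transporting the representation of Theorem~\ref{thm:sl2} through the isomorphism $\sigma \mapsto \S_\sigma^{\pi}$ yields a second $\mf{sl}_2$-action on $U_n^{\pi}$ whose $F$ and $H$ coincide with $\nabla$ and $\tilde{H}$; the uniqueness statement of Theorem~\ref{thm:at-most-one-action} then forces $E$ to be identified with $\Delta$, which is exactly the claimed formula. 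Note that your approach never invokes Theorem~\ref{thm:sl2}, whereas the paper's proof leans entirely on it: a from-scratch coefficient comparison would amount to reproving nontrivial identities among genuinely multi-monomial Schubert polynomials, which is the difficulty the uniqueness argument is designed to bypass.
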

\begin{proof}
Let $\pi\in S_n$ be $132$-avoiding, with $\S_{\pi}=x^{\beta}$.  Let $V$ be the vector space spanned by all monomials $x^{\alpha}$ with $\alpha \leq \beta$.  Define a linear operator $\tilde{H}$ on $V$ that multiplies each monomial $x^{\alpha}y^{\beta-\alpha}$ by the scalar $\ell(\pi)-2|\alpha|$.  It is elementary to verify that the operators $\Delta, \nabla, \tilde{H}$ acting on $V$ satisfy the relations defining $\mf{sl}_2$.  By Proposition~\ref{prop:applying-nabla}, the subspace $U_n^{\pi} \subseteq V$ is closed under the actions of $\nabla$ and $\tilde{H}$.  Furthermore, since the number of elements in $[e,\pi]_R$ of length $r$ is equal to the number of length $\ell(\pi)-r$ for all $r$ (see \cite{Wei}), $U_n^{\pi}$ is weight symmetric.  Thus Proposition~\ref{prop:b-submodule} implies that $U_n^{\pi}$ is in fact an $\mf{sl}_2$-subrepresentation of $V$.

Let $E,F,H \in \End(\C [e,\pi]_R)$ be the transformations by which $e,f,h$ act in the $\mf{sl}_2$-representation given in Section~\ref{sec:sl2} and let $\ph: \C [e,\pi]_R \to U_n^{\pi}$ denote the invertible linear transformation sending $\sigma \mapsto \S_\sigma^{\pi}$ for all $\sigma \in [e,\pi]_R$.  Then Proposition~\ref{prop:applying-nabla} implies that $\ph$ identifies $F$ with $\nabla$ and the fact that $\S_\sigma(x)$ is homogeneous of degree $\ell(\sigma)$ implies that $\ph$ identifies $H$ with $\tilde{H}$.  Now, Theorem~\ref{thm:at-most-one-action} implies that $E$ must be identified with $\Delta$; by the definition of $E$ this gives the desired formula.
\end{proof}

We are now ready to complete the proof of Theorem~\ref{thm:schubert}, which can be seen as a strong order analogue of Macdonald's weak order formula (Theorem~\ref{thm:macdonald-identity}).  This theorem generalizes the main result of \cite{duality-paper}, which is the case $\pi=w_0$.

\begin{proof}[Proof of Theorem~\ref{thm:schubert}]
Let $\pi \in S_n$ be a $132$-avoiding permutation, and let $\sigma \leq \pi$.  Then it is easy to see that 
\[
\Delta^{\ell(\pi)-\ell(\sigma)} \S_\sigma^{\pi} = (\ell(\pi)-\ell(\sigma))! \: \S_\sigma^{\pi}(1,\ldots,1; 1,\ldots,1) x^{\beta}.
\]
Indeed this fact holds for any polynomial from $U_n^{\pi}$ which is homogeneous in the $x$-variables of degree $\ell(\sigma)$.  On the other hand, repeated application of Proposition~\ref{prop:applying-delta-to-padded} yields
\[
\Delta^{\ell(\pi)-\ell(\sigma)} \S_\sigma^{\pi} = \left( \sum_{C: \sigma \to \pi} \wt^{\pi}(C)\right) x^{\beta}, 
\]
the sum being over all saturated chains from $\sigma$ to $\pi$ in $([e,\pi]_R, \preceq)$.  Since
\[
\S_\sigma^{\pi}(1,\ldots,1; 1,\ldots,1)=\S_\sigma(1,\ldots,1),
\]
this completes the proof.
\end{proof}

\section*{Acknowledgements}

We wish to thank the MIT PRIMES program, during which this research was conducted.

\bibliographystyle{plain}
\bibliography{main}
\end{document}